\newcommand\marklessfootnote[1]{
    \addtocounter{footnote}{1} 
    \footnotetext{#1}
}
\theoremstyle{plain} 
\newtheorem{theorem}{\indent\sc Theorem}[section]
\newtheorem{lemma}[theorem]{\indent\sc Lemma}
\newtheorem{corollary}[theorem]{\indent\sc Corollary}
\newtheorem{proposition}[theorem]{\indent\sc Proposition}
\theoremstyle{definition} 
\newtheorem{definition}[theorem]{\indent\sc Definition}
\newtheorem{remark}[theorem]{\indent\sc Remark}
\newtheorem{question}[theorem]{\indent\sc Question}
\newtheorem{example}[theorem]{\indent\sc Example}
\title{\uppercase{\normalsize A New Polynomial for Checkerboard-Colorable 4-Valent Virtual Graphs}}
\author
{Hamid Abchir$^{1*}$, Khaled Qazaqzeh$^2$, and Mohammed Sabak$^3$\\
\scriptsize{$^1$Fundamental and Applied Mathematics Laboratory, Hassan II University, Casablanca, Morocco, 20100
\\
$^2$Department of Mathematics, Faculty of Science, Yarmouk University, Irbid, Jordan, 21163 \\
$^3$Fundamental and Applied Mathematics Laboratory, Hassan II University, Casablanca, Morocco, 20100}
}
\begin{document}
\maketitle
\marklessfootnote{
2020 \textit{Mathematics Subject Classification}.
05C31, 57K14
}
\marklessfootnote{Key words and phrases.
\textit{4-valent graphs, Jones-Kauffman polynomial, Euler circuits, virtual links, checkerboard-colorable.}}

\begin{abstract} 
We assign a new polynomial to any vertex-signed checkerboard-colorable 4-valent virtual graph in terms of its Euler circuit expansion. This provides a new combinatorial formulation of the Jones-Kauffman polynomial for checkerboard-colorable virtual links.
\end{abstract}

\section{Introduction}
The Jones-Kauffman polynomial is an invariant for oriented virtual links, first introduced in \cite{Ka} and inspired by the work in \cite{J, Ka2}. Following its introduction, numerous combinatorial formulations of this polynomial have been developed, initially for classical links and later generalized for virtual links. In the late 1980s, Thistlethwaite \cite{Th} expressed the Jones-Kauffman polynomial of a classical link using an improved version of the Tutte polynomial of a planar graph associated with the link. In the late 2000s, several authors adopted a similar strategy, expressing the Jones-Kauffman polynomial of classical and virtual links in terms of the Bollobás–Riordan polynomial. This polynomial, introduced in \cite{br}, generalizes the Tutte polynomial for ribbon graphs. This approach using the Bollobás–Riordan polynomial was first introduced in \cite{cp} for checkerboard-colorable virtual links, followed by two additional realizations in \cite{cv} for general virtual links, and in \cite{dfk} for classical links. Although the results presented in \cite{cp}, \cite{cv}, and \cite{dfk} are formally distinct, each of them uses a different construction of ribbon graphs from link diagrams and utilizes varying substitutions in the Bollobás-Riordan polynomials for these graphs. Eventually, the work in \cite{S. Chmutov} unified these different results by introducing a new duality concept for graphs embedded in surfaces. Recently, in 2017, Q. Deng et al. \cite{Q. Deng} introduced the concept of cyclic graphs, which is equivalent to orientable ribbon graphs. Then they defined a polynomial for these specific graphs, which is related to the Jones-Kauffman polynomial and provided a new expression of this polynomial. 

In this paper, we follow a similar approach and give a new combinatorial formulation of the Jones-Kauffman polynomial specifically for checkerboard-colorable virtual links. We start by defining a new polynomial invariant for vertex-signed checkerboard-colorable 2-digraphs based on Euler circuits. Then, we show that this polynomial satisfies a skein relation, which enables us to recover the Jones-Kauffman polynomial of checkerboard-colorable virtual links.

Now we describe the contents of this paper. In Section 2, we briefly introduce and recall the main terms that will be used in this paper. In Section 3, we define the new polynomial for vertex-signed 2-digraphs. Also, we state the main theorem, and we give some properties of this polynomial. In particular, we show that it satisfies a skein relation. At the end of this section, we give an application that shows that the polynomial of the signed-vertex 2-digraph associated to a virtual link diagram allows us to recover its Jones-Kauffman polynomial. The proofs of the primary theorems are addressed in Section 4.
\section{Preliminary}

We study 4-valent planar connected graphs with two types of double points, \textit{actual} double points which constitute the vertices, and \textit{virtual} double points. In other words, the edges of such graphs can intersect transversely in some points which are not vertices called virtual double points. Such graphs are called {\sl 4-valent virtual graphs}. An edge in such graph is a curve joining two consecutive vertices. A \textit{$2$-digraph} is a directed $4$-valent virtual graph such that at each vertex there are $2$ incoming and $2$ outgoing edges. 

We say that a $4$-valent virtual graph $G$ is \textit{checkerboard-colorable} if there is a coloring of one side of each edge in the graph such that near a vertex the coloring of opposite edges alternate and near a virtual double point the colorings go through independent of the crossing strand and its coloring. See Figure \ref{fig2} as an example of a checkerboard-colored 4-valent virtual graph.

\begin{figure}[H]
\centering
\includegraphics[scale=0.23]{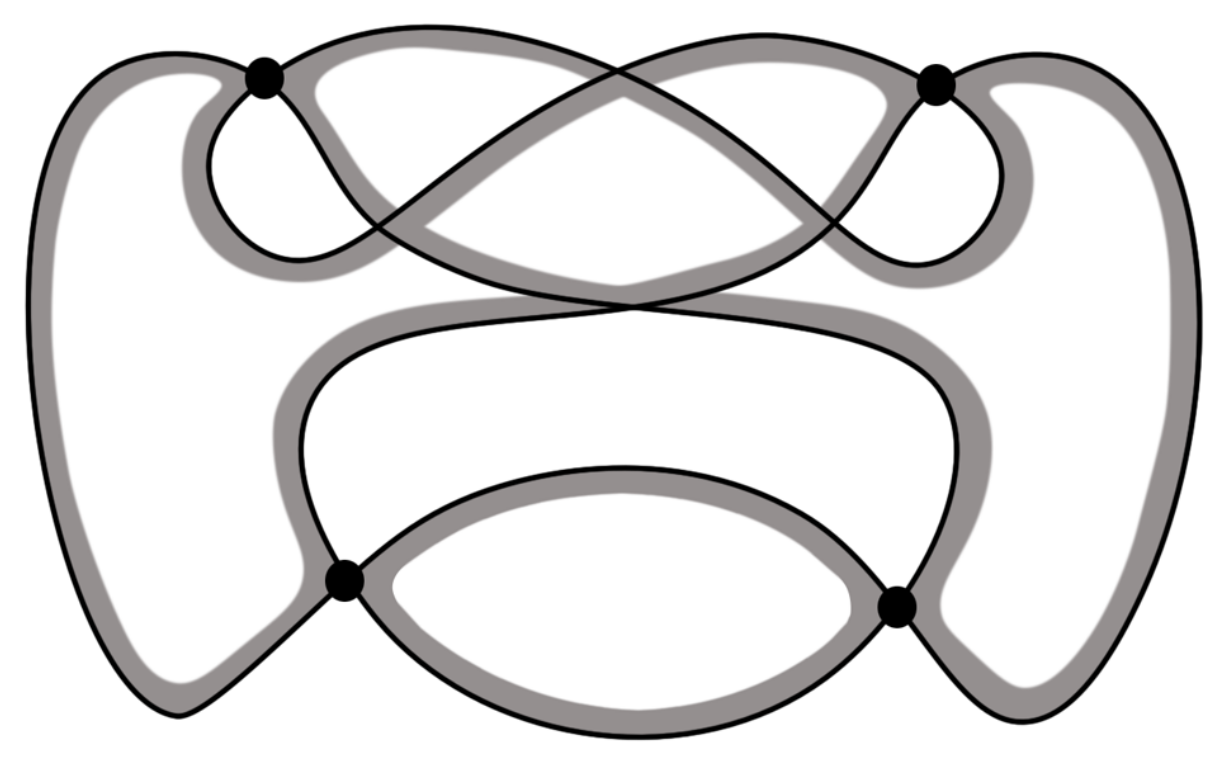}   
\caption{Checkerboard-colored 4-valent virtual graph.}
\label{fig2}
\end{figure}

A 2-digraph is said to have a \textit{source-target} structure if the orientation of the incoming and outgoing edges at each vertex is as depicted in Figure \ref{fig1}. Note that for a given $2$-digraph, either it has no source-target structure or it has exactly two source-target structures, each of which is obtained from the other by reversing the orientation of each edge.
\begin{figure}[H]
\centering
\includegraphics[scale=0.13]{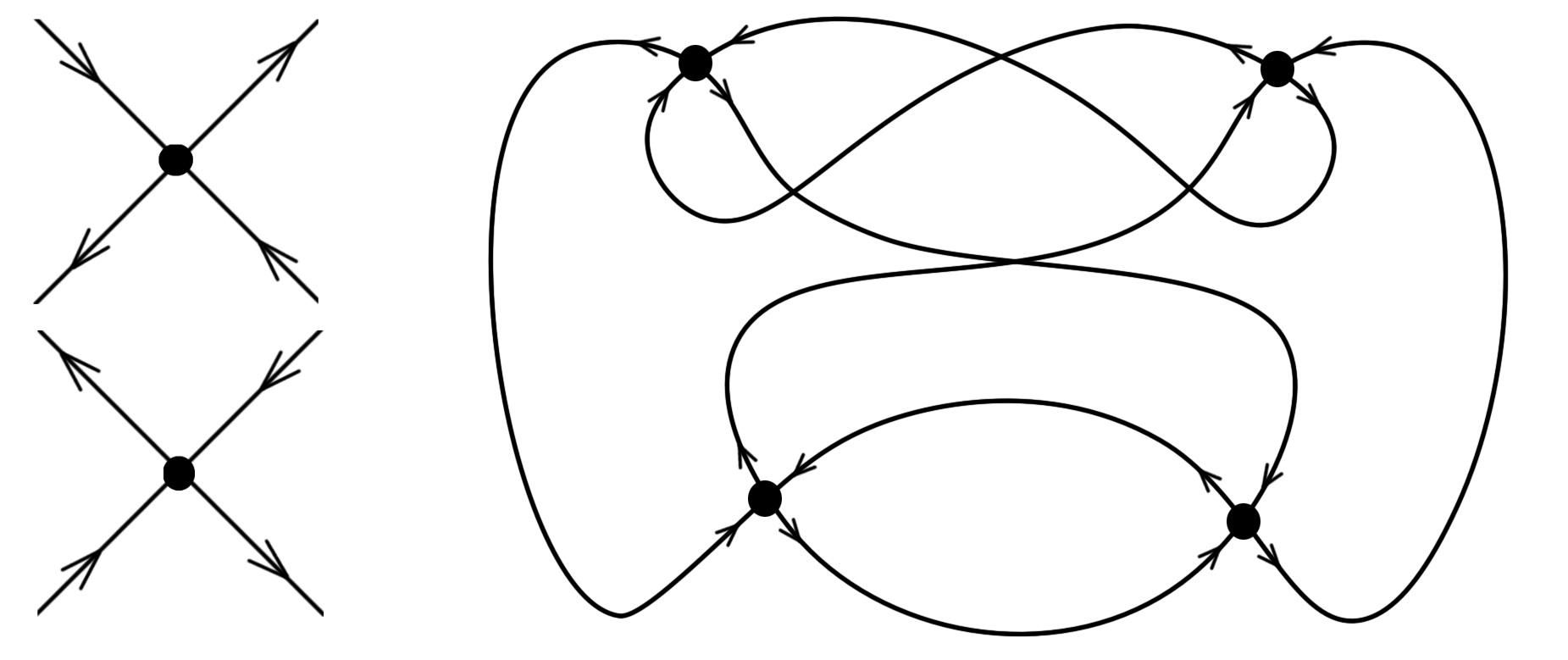}   
\caption{Allowed orientations around each vertex for a 2-digraph to have a source target structure.}
\label{fig1}
\end{figure}

\begin{remark}
A 2-digraph has a source-target structure iff it is checkerboard-colorable. In fact, if $G$ is a 2-digraph with a source-target structure, then to each oriented edge we associate a normal vector satisfying the right hand rule in the plane as depicted in the left of Figure \ref{fig5}. By coloring the side of each edge containing its normal vector we get a checkerboard coloring of $G$. Conversely, If $G$ is a checkerboard colored 4-valent graph, then, for each edge, we put a normal vector inside its colored side with initial point on that edge as depicted in the right of Figure \ref{fig5}. Then we orient the edge by using the right hand rule.
\end{remark}
Throughout the remainder of this paper, we adopt the convention that a checkerboard coloring is associated with a source-target structure as described above.

\begin{figure}[H]
\centering
\includegraphics[scale=0.18]{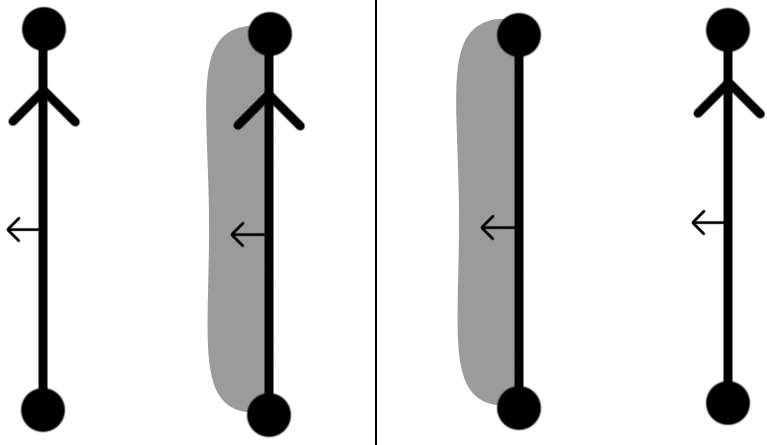}   
\caption{Determining a checkerboard coloring of a 4-valent graph from a source target structure and vice versa.}
\label{fig5}
\end{figure}

A 4-valent graph $G$ is said to be \textit{checkerboard-embedded} in some orientable surface $\Sigma$ if there exists an embedding $e$ of $G$ in $\Sigma$ such that the components of $\Sigma\setminus e(G)$ are 2-cells and there exists a coloring of these components by two colors, say black and white, such that two components of $\Sigma\setminus e(G)$ which are adjacent by an edge of $e(G)$ have always distinct colors. Manturov in \cite{manturov,m} showed that a 4-valent virtual graph is checkerboard-colorable iff it is checkerboard-embeddable in some orientable surface.

\subsection{Euler circuits and chord diagrams}

An \textit{Euler circuit} of a digraph is a path satisfying the following properties: 

\begin{enumerate}
\item[•] It begins and ends at the same vertex, forming a closed loop.
\item[•] It traverses each directed edge of the graph once.
\end{enumerate}

A digraph is said \textit{Eulerian} if it has an Euler circuit. It is known that a digraph is Eulerian if and only if the number of incoming and outgoing edges at each vertex are equal \cite{arratia}. Thus, every 2-digraph is Eulerian and each of its Euler circuits visits each vertex exactly twice. 

In the graph $G$, we let $v_1, \hdots , v_n$ be the set of vertices of such graph and $\gamma$ be an Euler circuit of $G$. Draw a simple circle $C$ in the plane and mark $2n$ evenly spaced points around its circumference. Start traveling $\gamma$ following the orientation of the edges starting from any vertex. Label each of the $2n$ points on the circle $C$ successively with the label of the vertex we encounter. Each vertex label appears twice at exactly two distinct points on the circle. For each vertex $v_i$ in $G$, draw a chord inside the circle $C$ connecting the two points labeled $v_i$. We ensure that each pair of chords intersect at most once and no more than two chords intersect at the same point. The circle $C$ together with the $n$ chords constitute the \textit{chord diagram} of $\gamma$, which we denote by $C(\gamma)$. See Figure \ref{fig3} as an example for an Euler circuit $\gamma$ and its corresponding chord diagram $C(\gamma)$. 

\begin{figure}[H]
\centering
\includegraphics[scale=0.17]{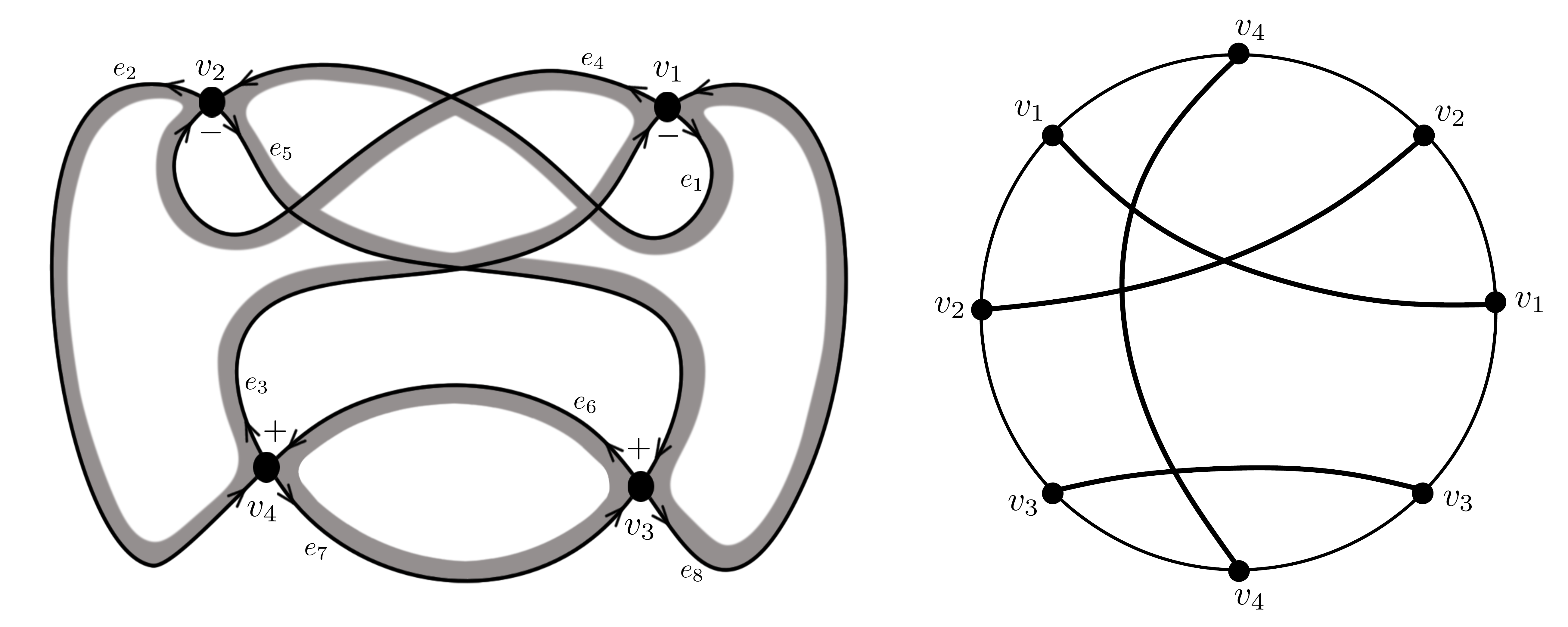}   
\caption{A 2-digraph with an Euler circuit $\gamma = v_{1}e_{1}v_{2}e_{2}v_{4}e_{3}v_{1}e_{4}v_{2}e_{5}v_{3}e_{6}v_{4}e_{7}v_{3}e_{8}v_{1}$ on the left and the corresponding chord diagram of $\gamma$ on the right.}
\label{fig3}
\end{figure}

We say that two distinct vertices $v_i$ and $v_j$ of $G$ \textit{interlace} in $\gamma$ if their corresponding chords intersect in the chord diagram $C(\gamma)$. For a vertex $v_i$ of $G$, we denote the subset of $\left\lbrace 1,2, \hdots ,n \right\rbrace$ consisting of the indices of vertices interlacing with $v_i$ in $\gamma$ by $\mathcal{C}_i(\gamma)$. We denote the complement of $\mathcal{C}_{i}(\gamma)$ in $\left\lbrace 1, \hdots , n \right\rbrace$ by $\overline{\mathcal{C}}_{i}(\gamma)$.

\subsection{Virtual links}
A \textit{virtual link diagram} is an immersion of one or several circles in the plane with only double points, such that each double point is either a \textit{classical crossing} indicated by over and under-crossings, or a \textit{virtual crossing} indicated by a circle as depicted in Figure \ref{fig10}. Two virtual link diagrams are \textit{equivalent} if they are related by a finite sequence of the generalized Reidemeister moves which are local modifications of virtual link diagrams depicted in Figure \ref{fig11}.

\begin{figure}[H]
\centering
\includegraphics[scale=0.2]{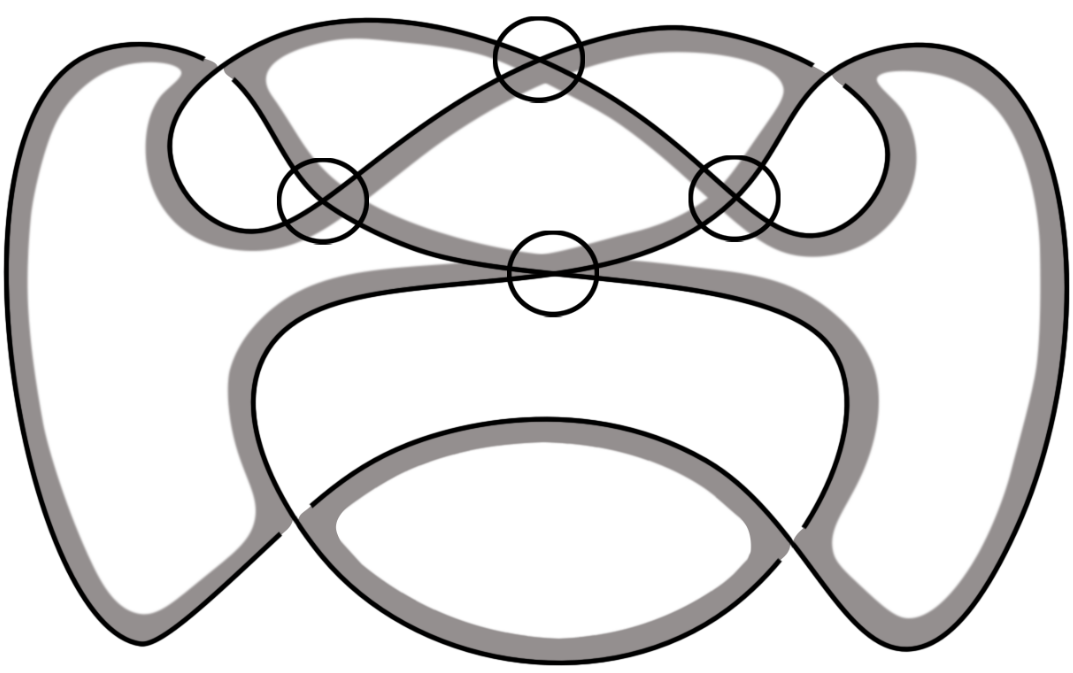}   
\caption{A checkerboard-colorable virtual link diagram.}
\label{fig10}
\end{figure}

\begin{figure}[H]
\centering
\includegraphics[scale=0.25]{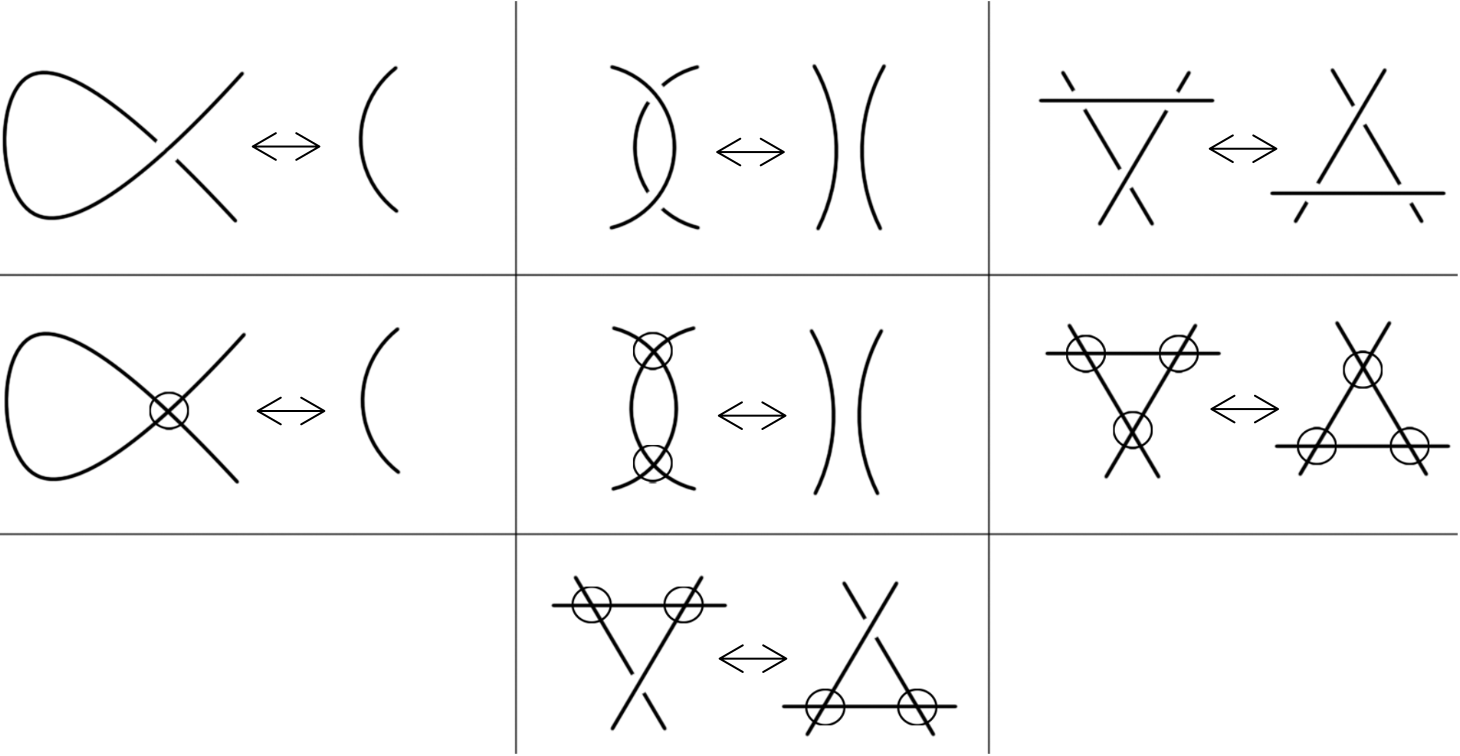}   
\caption{The generalized Reidemeister moves.}
\label{fig11}
\end{figure}

A \textit{virtual link} is an equivalence class of virtual diagrams under the generalized Reidemeister moves. These links were introduced by Kauffman in \cite{Ka} as a natural generalization of classical links.

\subsection{The Jones-Kauffman polynomial of a virtual link}
One of the most famous invariants of oriented virtual links is the Jones-Kauffman polynomial which was first introduced in \cite{Ka} and motivated by the work in \cite{J,Ka2}. It is a Laurent polynomial with integral coefficients that can be defined in terms of the Kauffman bracket. 
We briefly recall the definition of this polynomial as follows:

\begin{definition}
The Kauffman bracket polynomial is a function from the set of unoriented virtual link diagrams in the oriented plane to the ring of Laurent polynomials with integer coefficients in an indeterminate $q$. It maps a virtual link diagram $L$ to $\left\langle L \right\rangle\in \mathbb Z[q^{-1},q]$ and it is defined by the following relations:
\begin{enumerate}
\item $\left\langle \bigcirc \right\rangle=1$,
\item $\left\langle \bigcirc \cup L\right\rangle=(-q^{-2}-q^2)\left\langle L
\right\rangle$,
\item $\left\langle L\right\rangle=q\left\langle L_0\right\rangle+q^{-1}\left\langle
L_1\right\rangle$,
\end{enumerate}
where $\bigcirc$, in relation 2,  denotes a trivial circle  disjoint from the rest of the virtual link, and  $L,L_0, \text{and } L_1$  represent three unoriented virtual link diagrams which are identical everywhere  except in a small region where they are as indicated in  Figure \ref{figure}.
\end{definition}
      \begin{figure} [h]
\begin{center}
\includegraphics[scale=0.3]{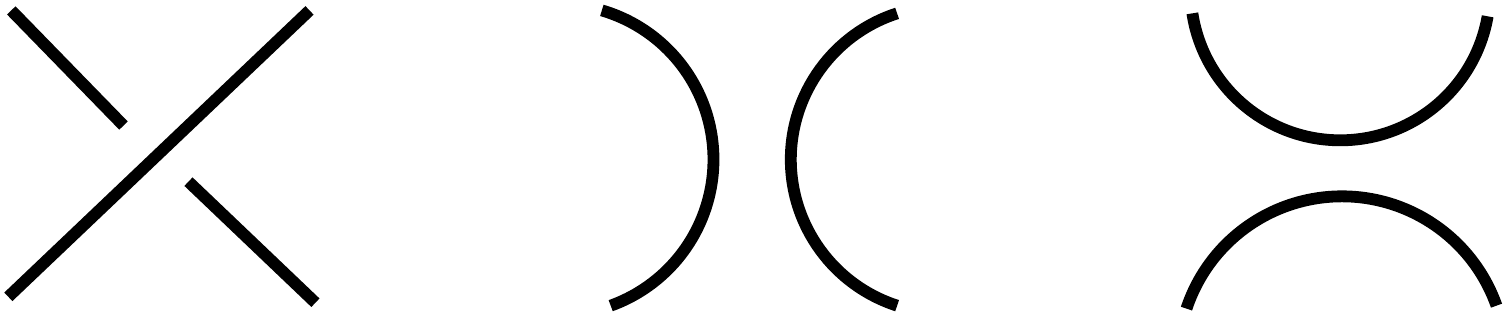} \\
\hspace{0.1cm}{$L$}\hspace{2.7cm}{$L_0$}\hspace{2.5cm}$L_{1}$
\end{center}
\vspace{-0.4cm}
\caption{The link diagram $L$ at the crossing $c$ and its smoothings $L_{0}$ and $L_{1}$.}\label{figure}
\end{figure}

For an oriented virtual link diagram $L$, we let $x(L)$ denote the number of negative crossings and $y(L)$ denote the number of positive crossings in $L$ according to the scheme in Figure \ref{Diagram1}. The writhe of the virtual link diagram $L$ is defined to be the integer $\omega(L)= y(L) - x(L)$.
\begin{definition}
The Jones-Kauffman polynomial $f_{L}(q)$ of the oriented virtual link $L$ is the Laurent polynomial in $q$ with integer coefficients defined by
\begin{equation*}
f_{L}(q)=(-q)^{-3\omega(L)}\left\langle L \right\rangle \in \mathbb Z[q,q^{-1}],
\end{equation*}
where $\left\langle L \right\rangle $ is the Kauffman bracket of the unoriented virtual link obtained from $L$ by ignoring  the orientation.
\end{definition}

\begin{figure}[h]
	\centering
		\reflectbox{\includegraphics[scale=0.1]{Diagram1}}\hspace{1.1cm}
		\includegraphics[scale=0.1]{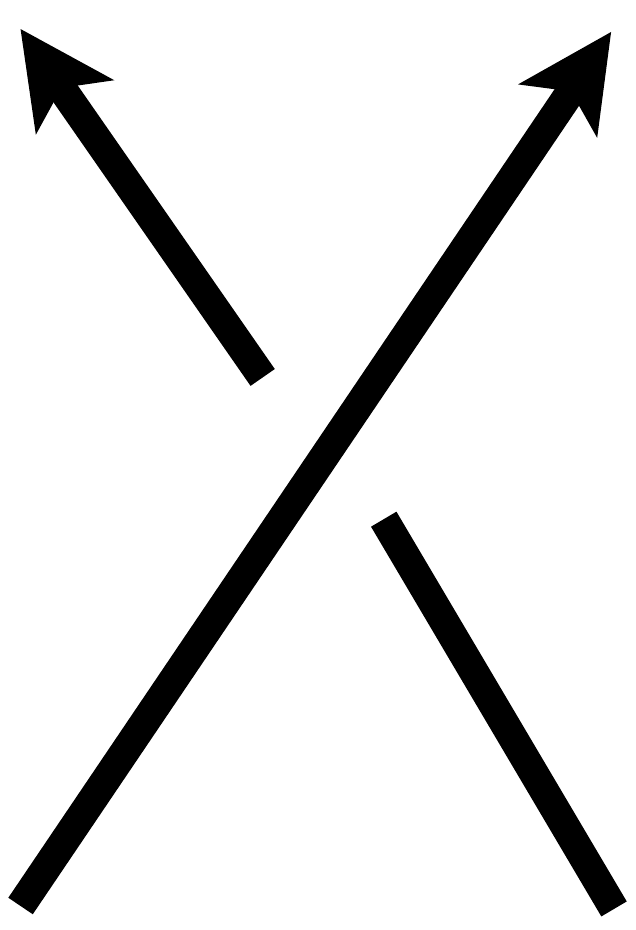}
		\vspace{-0.2cm}
	\caption{Negative and positive crossings respectively.}
	\label{Diagram1}
      \end{figure}

\section{The main results and an application}
We say that a $2$-digraph is \textit{vertex-signed} if each vertex is equipped with a sign $+$ or $-$.\\
For the rest of this paper, we assume that any $2$-digraph is vertex-signed and checkerboard-colorable unless mentioned otherwise.\\
Let $G$ be a $2$-digraph with $n$ vertices $v_i$, $1\le i\le n$, and $\gamma$ be an Euler circuit of $G$. At each vertex $v_i$, we denote the two incoming edges by $e_{i_1}$ and $e_{i_2}$, and we denote the two outgoing edges by $e_{i_3}$ and $e_{i_4}$. We assume that at a fixed vertex $v_i$ the Euler circuit $\gamma$ of $G$ traverses successively the edges $e_{i_1}$ and $e_{i_3}$ in the first visit, and then it traverses successively $e_{i_2}$ and $e_{i_4}$ in the second visit. Merge the edges $e_{i_1}$ and $e_{i_3}$, i.e., replace them with a single new edge going from the tail of $e_{i_1}$ to the head of $e_{i_3}$. Also, we merge the edges $e_{i_2}$ and $e_{i_4}$ using the same technique and we delete the vertex $v_i$ to get a new graph with one less vertex. In the vicinity of the deleted vertex $v_i$, put a marker connecting the traces of the deleted vertex on each one of the new edges. We label each marker by $A$, $a$, $B$, or $b$ according to the coloring, the sign of the vertex and the order of the edges when traversing the path as explained in Figure \ref{fig4}. Such labeled marker for the vertex $v_{i}$ is denoted by  ``$i$-th marker''. We iterate this merging process until all the vertices are deleted and we obtain a single immersed circle in the plane with $n$ labeled markers in the vicinities of the deleted vertices. We call such data the $\gamma$-\textit{state} of the corresponding Euler circuit $\gamma$ of $G$. In Figure \ref{fig6}, we depict the $\gamma$-state of the Euler circuit $\gamma$ given in Figure \ref{fig3}.
\begin{figure}[H]
\centering
\includegraphics[scale=0.18]{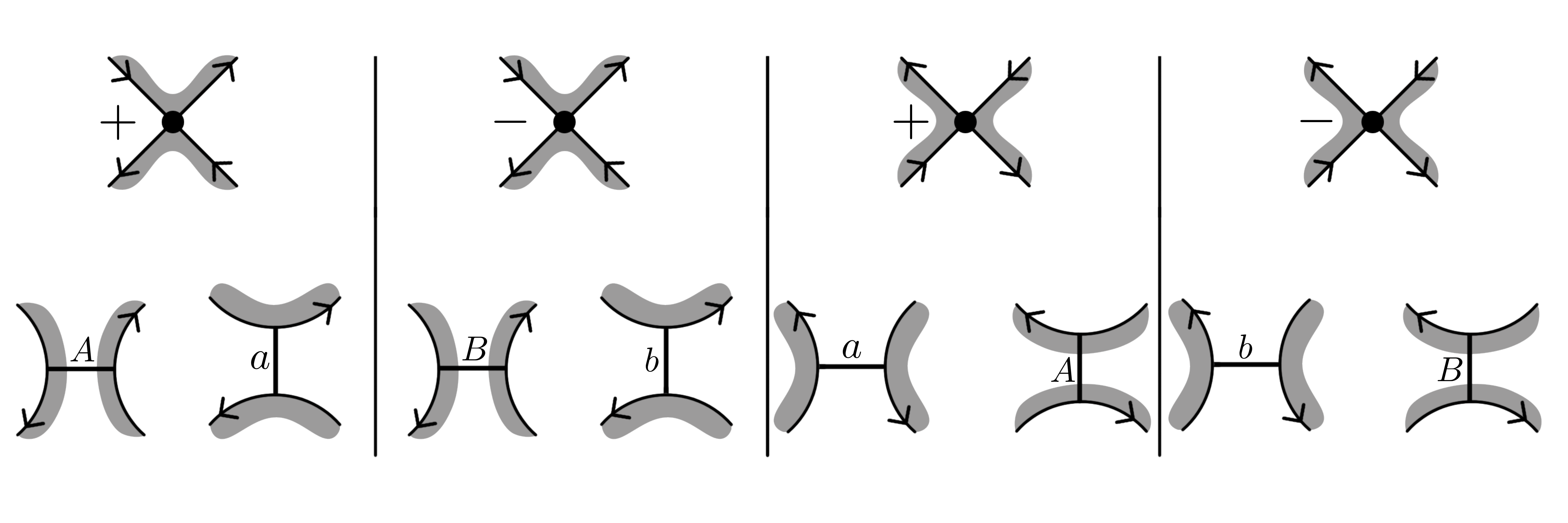}   
\caption{The $i$-th marker corresponding to the deleted vertex $v_i$ and the rule of its labeling.}
\label{fig4}
\end{figure}

\begin{figure}[H]
\centering
\includegraphics[scale=0.2]{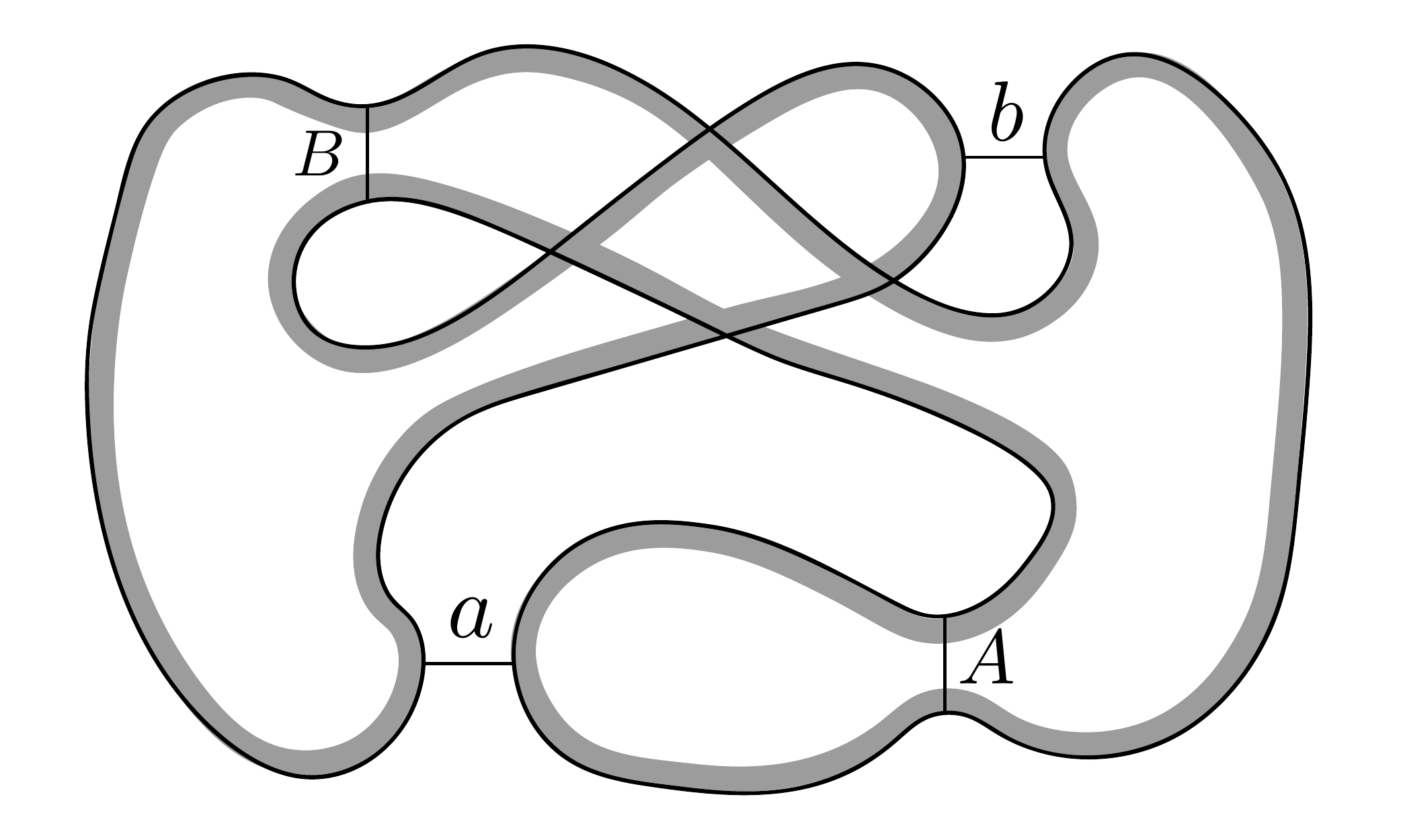}   
\caption{The $\gamma$-state of the Euler circuit $\gamma$ given in Figure \ref{fig3}.}
\label{fig6}
\end{figure}
For a fixed coloring of the graph $G$, a vertex $v_i$ of $G$ is \textit{internal} to $\gamma$ if the $i$-th marker is either an $A$-marker or $B$-marker. Otherwise, $v_i$ is \textit{external} to $\gamma$. The vertex $v_i$ is \textit{live} with respect to $\gamma$ if $\mathcal{C}_i(\gamma) \subset \left\lbrace i+1, \hdots ,n \right\rbrace$. Otherwise, $v_i$ is \textit{dead} with respect to $\gamma$. Being internal or external and live or dead determine the \textit{state} of the vertex $v_i$ with respect to $\gamma$.
The \textit{activity word} $w(\gamma)$ of the Euler circuit $\gamma$ is a word of $n$ letters in the alphabet $\left\lbrace L,D,l,d,\overline{L},\overline{D},\overline{l},\overline{d} \right\rbrace$ such that the $i$-th \textit{activity letter} $w_i(\gamma)$ corresponding to the vertex $v_i$ is chosen according to Table \ref{table1}. 

\begin{table}[ht]
\begin{center}
\begin{tabular}{|ll|l|l|}
\hline
\multicolumn{2}{|l|}{State of $v_i$ with respect to $\gamma$}                   & The label of the $i$-th marker & The activity letter $w_i(\gamma)$ \\ \hline
\multicolumn{1}{|l|}{\multirow{4}{*}{Internal}} & \multirow{2}{*}{Live} & $A$   &  {$L$} \\  \cline{3-4} 
\multicolumn{1}{|l|}{}                          &                       & $B$   & {$\overline{L}$}    \\   \cline{2-4} 
\multicolumn{1}{|l|}{}                          & \multirow{2}{*}{Dead} & $A$   & {$D$}   \\   \cline{3-4} 
\multicolumn{1}{|l|}{}                          &                       & $B$   &  {$\overline{D}$}   \\  \hline
\multicolumn{1}{|l|}{\multirow{4}{*}{External}} & \multirow{2}{*}{Live} & $a$   & {$l$}    \\  \cline{3-4} 
\multicolumn{1}{|l|}{}                          &                       & $b$ &  {$\overline{l}$}    \\  \cline{2-4} 
\multicolumn{1}{|l|}{}                          & \multirow{2}{*}{Dead} & $a$ & {$d$}   \\ \cline{3-4} 
\multicolumn{1}{|l|}{}                          &                       & $b$ & {$\overline{d}$}   \\ \hline
\end{tabular}
\caption{The activity of the vertex $v_{i}$ with respect to the Euler circuit $\gamma$ and the corresponding letter $w_i(\gamma)$.}
\label{table1}
\end{center}
\end{table}

Once the activity word $w(\gamma)$ is established, we assign to each letter $w_i(\gamma)$ a monomial called the \textit{weight} of the vertex $v_i$ with respect to $\gamma$ and denoted by $\mu_i(\gamma)\in \mathbb{Z}\left[ q^{-1},q \right]$ according to Table \ref{table2}. 

\begin{table}[H]
\centering
\begin{tabular}{|c|c|c|c|c|c|c|c|c|} \hline 
$w_i(\gamma)$ & $L$ & $D$ & $l$ & $d$ &  $\overline{L}$ & $\overline{D}$ &  $\overline{l}$ & $\overline{d}$ \\
\hline
$\mu_i(\gamma)$ & $-q^{-3}$ & $q$ & $-q^3$ & $q^{-1}$ & $-q^3$ & $q^{-1}$ & $-q^{-3}$ & $q$ \\
\hline
\end{tabular}
\caption{The weight of the letter $w_{i}(\gamma)$ in the word $w(\gamma)$.}
\label{table2}
\end{table}

The \textit{weight} of the Euler circuit $\gamma$ is the monomial $\mu(\gamma) := \displaystyle \prod_{1 \leq i \leq n} \mu_i(\gamma)$. Now we define the Laurent polynomial $X_G$ for $G$ as follows: \[ X_G(q) := \displaystyle \sum_{\text{Euler circuits } \gamma \text{ of } G}\mu(\gamma).\]
This definition of such a polynomial can be also extended to the case of disconnected $2$-digraphs as follows: 
\[ X_G(q) = (-(q^2+q^{-2}))^{m-1}\displaystyle \prod_{1 \leq i \leq m} X_{G_i}(q),\] where $G_1, \hdots , G_m$ are the connected components of the graph $G$.\\
 Now we state the main theorem of this paper whose proof will be given in the next section.
\begin{theorem}\label{main}
Let $G$ be a vertex-signed checkerboard-colorable $2$-digraph. The polynomial $X_G(q)$ is an invariant of the graph-isomorphism class of $G$ and it is independent from the chosen checkerboard coloring of $G$ and the labeling of the vertices. 
\label{theorem1}
\end{theorem}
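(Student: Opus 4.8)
The plan is to establish the three claimed independences one at a time, in increasing order of difficulty: invariance under relabeling of the vertices, invariance under graph isomorphism, and finally independence from the choice of checkerboard coloring. The quantity $X_G(q)$ is a sum over the (finite) set of Euler circuits of $G$, and the weight $\mu(\gamma)$ is a product of local contributions $\mu_i(\gamma)$, one per vertex, each of which is read off from the activity letter $w_i(\gamma)$. So the strategy throughout is to track how the combinatorial data entering each $\mu_i(\gamma)$ — the internal/external dichotomy (governed by the marker label $A,a,B,b$) and the live/dead dichotomy (governed by the interlacement set $\mathcal{C}_i(\gamma)$ relative to the index ordering) — transforms under each operation, and to show the multiset $\{\mu(\gamma)\}$ over all Euler circuits is preserved.

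First, for relabeling: if $\sigma$ is a permutation of $\{1,\dots,n\}$ relabeling the vertices, then the Euler circuits are unchanged as subsets of edge-sequences, and for a fixed circuit $\gamma$ the marker label at each vertex is intrinsic (it depends only on the coloring, the vertex sign, and the cyclic order in which $\gamma$ traverses the four edge-ends, none of which involves the labels), so the internal/external status of every vertex is untouched. Only the live/dead status can change, since ``live'' is defined via $\mathcal{C}_i(\gamma)\subset\{i+1,\dots,n\}$, which refers to the ordering. Here I would invoke the standard fact from the theory of interlacement/activities (this is the same mechanism that makes the Tutte polynomial independent of the edge order): the key point is that $w_i(\gamma)$ with an overline versus without is determined purely by the marker label, and within each of the two internal rows (resp. two external rows) of Table~\ref{table1} the weights in Table~\ref{table2} that correspond to live/dead are, after summing appropriately over circuits related by a ``reversal'' move, independent of the chosen linear order. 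Concretely I expect the clean route is to realize that it suffices to prove invariance under a transposition of two adjacent labels $i,i+1$, and to check that the total contribution to $X_G$ of the pairs of Euler circuits that differ by swapping the first/second visits at the relevant vertices is symmetric in $i \leftrightarrow i+1$; this is a bounded case analysis.

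Second, graph isomorphism invariance is then essentially immediate: an isomorphism $G\to G'$ of vertex-signed checkerboard-colored $2$-digraphs carries Euler circuits to Euler circuits bijectively, carries chord diagrams to chord diagrams, hence preserves interlacement and (up to a relabeling, already handled) all the data in Tables~\ref{table1} and \ref{table2}; so $X_{G}=X_{G'}$. The disconnected case follows from the product formula by applying the connected case componentwise.

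Third — and this is the main obstacle — independence from the checkerboard coloring. A checkerboard-colorable $2$-digraph has, up to the global orientation-reversal noted in the excerpt, possibly several inequivalent checkerboard colorings, and changing the coloring changes which side of each edge is colored, hence changes the marker labels via Figure~\ref{fig4}: an $A$ may become $a$ or $B$, etc., so internal vertices can become external and vice versa, and even the source-target orientation (and thus the set of Euler circuits) may change. The plan is to show that these changes conspire so that the multiset of weights $\{\mu(\gamma)\}$ is preserved. I would analyze how a local recoloring (changing the color on one side of one edge, then propagating the forced changes around the alternating structure at the two endpoints) permutes the marker labels, and combine this with the weight values in Table~\ref{table2}, which are rigged so that, e.g., $L\leftrightarrow \overline L$ exchanges $-q^{-3}\leftrightarrow -q^3$ and simultaneously $D\leftrightarrow\overline D$ exchanges $q\leftrightarrow q^{-1}$ — the same swap induced on all vertices of $G$ by a global color flip. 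The cleanest formulation is probably: fix the underlying $4$-valent graph; the two colorings associated to the two source-target structures are related by reversing all edges, which reverses every Euler circuit, which transposes the two visits at every vertex simultaneously, which flips every marker label between its barred and unbarred counterpart, and one checks from Table~\ref{table2} that $\mu(\gamma)$ is invariant under the simultaneous barred/unbarred flip at all $n$ vertices (the product of the $n$ local swaps is the identity on monomials because each swap $L\!\leftrightarrow\!\overline L$, $D\!\leftrightarrow\!\overline D$, $l\!\leftrightarrow\!\overline l$, $d\!\leftrightarrow\!\overline d$ is weight-preserving in pairs). For genuinely distinct colorings not related merely by global reversal, one reduces to a sequence of single-edge recolorings and tracks the induced bijection on Euler circuits together with the compensating change in the set of internal vertices; establishing that bijection and its compatibility with activities is where the real work lies, and I would expect to spend most of the proof here, likely by setting up an explicit correspondence at the level of $\gamma$-states (Figure~\ref{fig6}) and checking it is weight-preserving marker by marker.
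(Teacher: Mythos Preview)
Your assessment of the difficulties is inverted. In the paper, independence from the checkerboard coloring is the \emph{easy} step and takes two sentences: a checkerboard-colorable $2$-digraph has exactly two checkerboard colorings (corresponding to the two source--target structures, each obtained from the other by reversing all edge orientations), so there are no ``genuinely distinct colorings not related merely by global reversal'' to worry about. Switching between the two colorings changes the activity letters by $L\leftrightarrow\overline{l}$, $l\leftrightarrow\overline{L}$, $D\leftrightarrow\overline{d}$, $d\leftrightarrow\overline{D}$ (not $L\leftrightarrow\overline{L}$ as you write), and each of these swaps preserves the weight in Table~\ref{table2} individually, so $\mu(\gamma)$ is unchanged circuit by circuit. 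Your proposed swap $L\leftrightarrow\overline{L}$ does \emph{not} preserve weights ($-q^{-3}$ versus $-q^{3}$), so that part of your argument fails as written.

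The substantive work is the labeling independence, which you dismiss as ``a bounded case analysis''. Your sketch speaks of ``pairs of Euler circuits that differ by swapping the first/second visits at the relevant vertices'', but swapping the order of visits at a single vertex does not in general produce another Euler circuit. The paper's proof relies on a tool you do not mention: the \emph{transposition} $\gamma^{v_iv_j}$ of an Euler circuit on an interlaced pair (Arratia--Bollob\'as--Sorkin), together with the pivot operation on the interlace graph and the identity $H^{uv}=H(\gamma^{uv})_{uv}$ (Lemma~\ref{lemma}). With these, one reduces to adjacent transpositions $i\leftrightarrow i{+}1$ and, in several cases depending on whether $v_i,v_{i+1}$ interlace in $\gamma$ and on their live/dead status, exhibits for each Euler circuit $\gamma$ of $G$ a matching circuit of $G_{v_iv_{i+1}}$ with the same weight --- sometimes $\gamma_{v_iv_{i+1}}$, sometimes $(\gamma^{v_iv_{i+1}})_{v_iv_{i+1}}$, and in certain sub-cases pairs of circuits cancel on both sides. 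This case analysis (Lemmas~\ref{lemma1}--\ref{lemma4} and Tables~\ref{table3}--\ref{table4}) is where the content lies, and your proposal does not supply it.
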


\begin{proposition}
Let $\overline{G}$ be the graph obtained from the graph $G$ after changing all the signs of the 4-valent vertices, then we have $X_{\overline{G}}(q) = X_{G}(q^{-1})$.
\end{proposition}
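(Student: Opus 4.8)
Here is the approach I would take. Because $\overline{G}$ and $G$ have the \emph{same} underlying directed $4$-valent virtual graph, the same checkerboard coloring, and (I would keep) the same vertex labeling $v_1,\dots,v_n$, they have exactly the same set of Euler circuits, and I would prove the identity term by term along that identification. Fix an Euler circuit $\gamma$. The chord diagram $C(\gamma)$, and hence every interlacement set $\mathcal{C}_i(\gamma)$, depends only on $\gamma$; consequently a vertex $v_i$ is live with respect to $\gamma$ in $G$ if and only if it is live with respect to $\gamma$ in $\overline{G}$. Likewise the $\gamma$-state is produced by a merging procedure that depends only on the order in which $\gamma$ traverses the four edges at each vertex, so the underlying immersed circle of the $\gamma$-state and the location of each marker are identical for $G$ and $\overline{G}$; in particular $v_i$ is internal to $\gamma$ in $G$ if and only if it is internal to $\gamma$ in $\overline{G}$, since being internal or external is read off from the coloring and $\gamma$ alone.

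The only thing that changes is the label attached to each marker, and here I would go back to the labeling rule of Figure \ref{fig4}. The label of the $i$-th marker is determined by the coloring, the order of the edges at $v_i$, and the sign of $v_i$; the first two data already fix whether the marker is of capital type ($A$ or $B$, i.e.\ internal) or of lowercase type ($a$ or $b$, i.e.\ external), while the sign of $v_i$ only selects between the two possibilities within that type. Hence flipping the sign of $v_i$ interchanges an $A$-marker with a $B$-marker and an $a$-marker with a $b$-marker and leaves the internal/external status untouched. Reading off Table \ref{table1} with the (unchanged) live/dead status held fixed, this says precisely that the $i$-th activity letter for $\overline{G}$ is obtained from the $i$-th activity letter for $G$ by toggling the overline: $L\leftrightarrow\overline{L}$, $D\leftrightarrow\overline{D}$, $l\leftrightarrow\overline{l}$, $d\leftrightarrow\overline{d}$.

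It then remains to compare weights. From Table \ref{table2} one checks directly that $\mu(\overline{L})(q)=\mu(L)(q^{-1})$, $\mu(\overline{D})(q)=\mu(D)(q^{-1})$, $\mu(\overline{l})(q)=\mu(l)(q^{-1})$, $\mu(\overline{d})(q)=\mu(d)(q^{-1})$, together with the four symmetric equalities obtained by swapping barred and unbarred letters. Combined with the previous paragraph this yields $\mu_i^{\overline{G}}(\gamma)=\mu_i^{G}(\gamma)\big|_{q\to q^{-1}}$ for every $i$, hence $\mu^{\overline{G}}(\gamma)=\mu^{G}(\gamma)\big|_{q\to q^{-1}}$, and summing over all Euler circuits $\gamma$ gives $X_{\overline{G}}(q)=X_{G}(q^{-1})$ when $G$ is connected. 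For a disconnected $2$-digraph with components $G_1,\dots,G_m$, the components of $\overline{G}$ are exactly $\overline{G_1},\dots,\overline{G_m}$, and since $-(q^{2}+q^{-2})$ is invariant under $q\mapsto q^{-1}$ the normalization factor $(-(q^{2}+q^{-2}))^{m-1}$ is unchanged; applying the connected case to each $\overline{G_i}$ then gives the statement in general.

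The one step that is more than bookkeeping is the analysis of Figure \ref{fig4} in the second paragraph: one must return to the precise conventions there and verify, by a short case analysis over the two local colorings and the two possible orders of the edges at a vertex, that changing the sign of a vertex does nothing more than swap $A\leftrightarrow B$ and $a\leftrightarrow b$ (and, in particular, never turns an internal vertex into an external one). Once that is pinned down, everything else is a direct comparison of Tables \ref{table1} and \ref{table2}, and the identity $X_{\overline{G}}(q)=X_{G}(q^{-1})$ follows immediately; note also that this is consistent with, and in fact refines, the coloring- and labeling-independence already established in Theorem \ref{main}.
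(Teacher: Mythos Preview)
Your argument is correct and follows exactly the paper's approach: the paper's proof consists of the single assertion $\mu_{G}(\gamma)(q)=\mu_{\overline{G}}(\gamma)(q^{-1})$ for each Euler circuit, and what you have written is a careful unpacking of why that identity holds (sign flip swaps $A\leftrightarrow B$ and $a\leftrightarrow b$, hence toggles the overline on each activity letter, hence inverts $q$ in each weight by Table~\ref{table2}). Your treatment of the disconnected case is an extra detail the paper omits but is entirely routine.
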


\begin{proof}
This follows as a result of the fact that $\mu_{G}(\gamma)(q) = \mu_{\overline{G}}(\gamma)(q^{-1})$, where $\mu_{G}(\gamma)$ and $\mu_{\overline{G}}(\gamma)$ are the weights of the Euler circuit $\gamma$ of the graphs $G$ and $\overline{G}$, respectively.
\end{proof}

We enclose this subsection by stating the following theorem whose proof will be given in the next section which implies that the polynomial $X_G(q)$ satisfies a skein relation. For this end, we let $G^v$ be a checkerboard-colorable vertex-signed $2$-digraph with a fixed vertex $v$ depicted in Figure \ref{figure1}. The $2$-digraphs resulting from the two possible merging operations at $v$ described in Section 3 will be denoted by $G_0^{v}$ and $G_1^{v}$ respectively according to the scheme in Figure \ref{figure1}. 

\begin{figure}[h]
\centering
\includegraphics[scale=0.25]{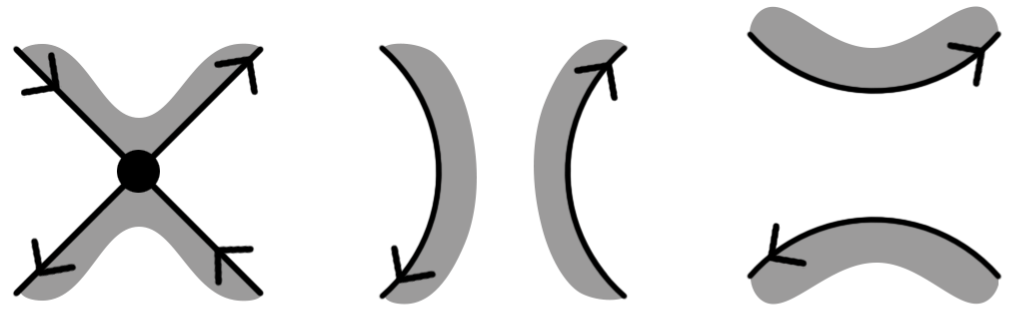}  
	\caption{The 2-digraph $G^v$ at the fixed vertex $v$ and the corresponding 2-digraphs $G_0^{v}$ and $G_1^{v}$ respectively obtained by the two merging operations.}
	\label{figure1}
\end{figure}

\begin{theorem}\label{theorem2}
Let $G$ be a checkerboard-colorable vertex-signed 2-digraph, then the polynomial $X_{G}$ satisfies the following skein relations:
\begin{enumerate}
\item If $v$ is equipped with a positive sign, then \[X_{G^{v}}(q) = q  X_{G_0^{v}}(q) + q^{-1}  X_{G_1^{v}}(q).\] 
\item  If $v$ is equipped with a negative sign, then \[X_{G^{v}}(q) = q^{-1}  X_{G_0^{v}}(q) + q  X_{G_1^{v}}(q).\] 
\end{enumerate}
\end{theorem}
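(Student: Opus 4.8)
The plan is to fix the vertex $v$ and analyze the Euler circuits of $G^v$ one merging at a time. By Theorem~\ref{main} we may relabel the vertices so that $v=v_n$ is the one of largest index, and since the skein move is supported on a single connected component of $G^v$ while $X$ is multiplicative over components up to a power of $-(q^2+q^{-2})$, we may assume $G^v$ is connected. Each Euler circuit $\gamma$ of $G^v$ realizes at $v_n$ exactly one of the two mergings of Figure~\ref{figure1}; write $E_0$, $E_1$ for the two resulting sets. For $\gamma\in E_\varepsilon$, performing the corresponding merging and deleting $v_n$ yields a curve $\gamma'$; since $\gamma$ is a single circuit, so is $\gamma'$, hence $\gamma'$ is an Euler circuit of $G_\varepsilon^v$, and (when $G_\varepsilon^v$ is connected) re-inserting $v_n$ shows $\gamma\mapsto\gamma'$ is a bijection $E_\varepsilon\to\{\text{Euler circuits of }G_\varepsilon^v\}$. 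Because $v_n$ has the largest label, deleting it changes neither the interlacements among the other vertices, nor their internal/external type, nor --- since $\mathcal{C}_i\subseteq\{i+1,\dots,n\}$ iff $\mathcal{C}_i\setminus\{n\}\subseteq\{i+1,\dots,n-1\}$ --- their live/dead status, and it restricts the checkerboard coloring; hence $\mu(\gamma)=\mu_{v_n}(\gamma)\,\mu(\gamma')$.

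The core of the proof is to identify the local weight $\mu_{v_n}(\gamma)$. I would check, directly from the labeling rule of Figure~\ref{fig4} and the local pictures of Figure~\ref{figure1}, that for a positively signed $v_n$ the $G_0^v$-merging forces the $n$-th marker into the weight-class $\{A,b\}$ and the $G_1^v$-merging into the class $\{B,a\}$, these classes being interchanged when $v_n$ is negative; by Table~\ref{table2} an $A$- and a $b$-marker always carry the same weight ($q$ if $v_n$ is dead, $-q^{-3}$ if live), and likewise a $B$- and an $a$-marker ($q^{-1}$ if dead, $-q^3$ if live), so only the class matters. Second, I would establish the connectivity dichotomy: a parity argument shows that $G^v\setminus v_n$ has at most two connected components, and $v_n$ is live in some $\gamma\in E_\varepsilon$ iff $G_{1-\varepsilon}^v$ is disconnected (necessarily into two pieces), the key point being that an isolated $v_n$-chord in $C(\gamma)$ cuts off an arc whose vertex set is precisely that of the component the complementary merging splits off. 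In particular at most one of $G_0^v$, $G_1^v$ is disconnected, and if both are connected then $v_n$ is dead in every Euler circuit of $G^v$.

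Granting these, the generic case --- both $G_0^v$, $G_1^v$ connected --- is immediate: $v_n$ is always dead, so for positive $v_n$,
\[
X_{G^v}=\sum_{\gamma\in E_0}\mu_{v_n}(\gamma)\,\mu(\gamma')+\sum_{\gamma\in E_1}\mu_{v_n}(\gamma)\,\mu(\gamma')=q\,X_{G_0^v}+q^{-1}X_{G_1^v},
\]
and the swapped weights handle the negative case. In the degenerate case, say $G_1^v=H_1\sqcup H_2$ (so $G_0^v$ is connected and every Euler circuit of $G^v$ lies in $E_0$ with $v_n$ live): $G_0^v$ is obtained from $H_1$ and $H_2$ by splicing them along the two merged edges, and a similar bijection --- now between Euler circuits of $G_0^v$ and pairs of Euler circuits of $H_1$, $H_2$ --- gives $X_{G_0^v}=X_{H_1}X_{H_2}$, whence $X_{G_1^v}=-(q^2+q^{-2})\,X_{G_0^v}$ by the disconnected-graph formula. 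Since $\mu_{v_n}(\gamma)=-q^{-3}$ for every such $\gamma$ (positive $v_n$, live, $G_0^v$-merging) and $-q^{-3}=q+q^{-1}\!\left(-(q^2+q^{-2})\right)$, we obtain $X_{G^v}=-q^{-3}X_{G_0^v}=q\,X_{G_0^v}+q^{-1}X_{G_1^v}$; the three remaining sub-cases (negative $v_n$, or $G_0^v$ disconnected) are identical after swapping the weights and/or the roles of $G_0^v,G_1^v$.

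The step I expect to be the actual obstacle is the first half of the second paragraph: confirming from Figure~\ref{fig4} that the two mergings select exactly the weight-classes $\{A,b\}$ and $\{B,a\}$ --- rather than the more symmetric-looking $\{A,a\}$ versus $\{B,b\}$ --- and that a sign change at $v_n$ swaps them. This is where the combinatorial labeling rule must mesh precisely with the monomials of Table~\ref{table2}; everything else is bookkeeping about Euler circuits and their chord diagrams.
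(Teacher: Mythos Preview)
Your proof is correct and follows the paper's strategy closely: relabel so $v=v_n$, split on whether $v$ is a cut vertex (equivalently, whether one of $G_0^v,G_1^v$ is disconnected), set up the Euler-circuit bijection with $G_0^v\sqcup G_1^v$ in the generic case, and in the degenerate case use $X_{G_0^v}=X_{H_1}X_{H_2}$ together with the identity $-q^{-3}=q+q^{-1}\bigl(-(q^2+q^{-2})\bigr)$. On the point you flag as uncertain, the paper resolves it not by arguing in weight-classes but by invoking Theorem~\ref{main} to \emph{fix} the coloring so that the $G_0^v$-merging carries an $A$-marker and the $G_1^v$-merging an $a$-marker; combined with the coloring-swap rule $L\leftrightarrow\overline{l}$, $l\leftrightarrow\overline{L}$ (i.e.\ $A\leftrightarrow b$, $a\leftrightarrow B$ on markers) from the proof of Theorem~\ref{main}, this is exactly your claim that the two mergings select the classes $\{A,b\}$ and $\{B,a\}$.
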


As an application, we relate the polynomial introduced in this section to the established Jones-Kauffman polynomial of checkerboard-colorable virtual links. We start by recalling a brief background on virtual knot theory focusing on checkeroboard-colorable virtual links. Given a virtual link diagram, one can obtain a 4-valent virtual graph by replacing each classical crossing with a vertex and each virtual crossing by a virtual double point. The obtained graph is called the \textit{shadow graph} of the given virtual link diagram. If this graph is checkerboard-colorable, then the associated virtual link diagram is checkerboard-colorable. For instance, the virtual link diagram in Figure \ref{fig10}, whose shadow graph is exactly the graph depicted in Figure \ref{fig2}, is checkerboard-colorable.  A notable fact about the checkerboard-colorability is that it behaves well under generalized Reidemeister moves since any two equivalent checkerboard-colorable virtual link diagrams are related by a finite sequence of moves which preserve the checkerboard-colorability as shown in \cite[Theorem\,2.9]{Im}. A virtual link is \textit{checkerboard-colorable} if it admits a checkerboard-colorable diagram. We point out that the notion of checkerboard-colorable virtual links was first introduced by Kamada in \cite{K, K2}. 

The shadow graph of a checkerboard-colorable virtual link diagram can be made into a vertex-signed 4-valent graph following the rules explained in Figure \ref{fig9}. As an example, the vertex-signed graph in the left of Figure \ref{fig3} is exactly the shadow graph of the virtual link diagram depicted in Figure \ref{fig10}. 

\begin{figure}[H]
\centering
\includegraphics[scale=0.18]{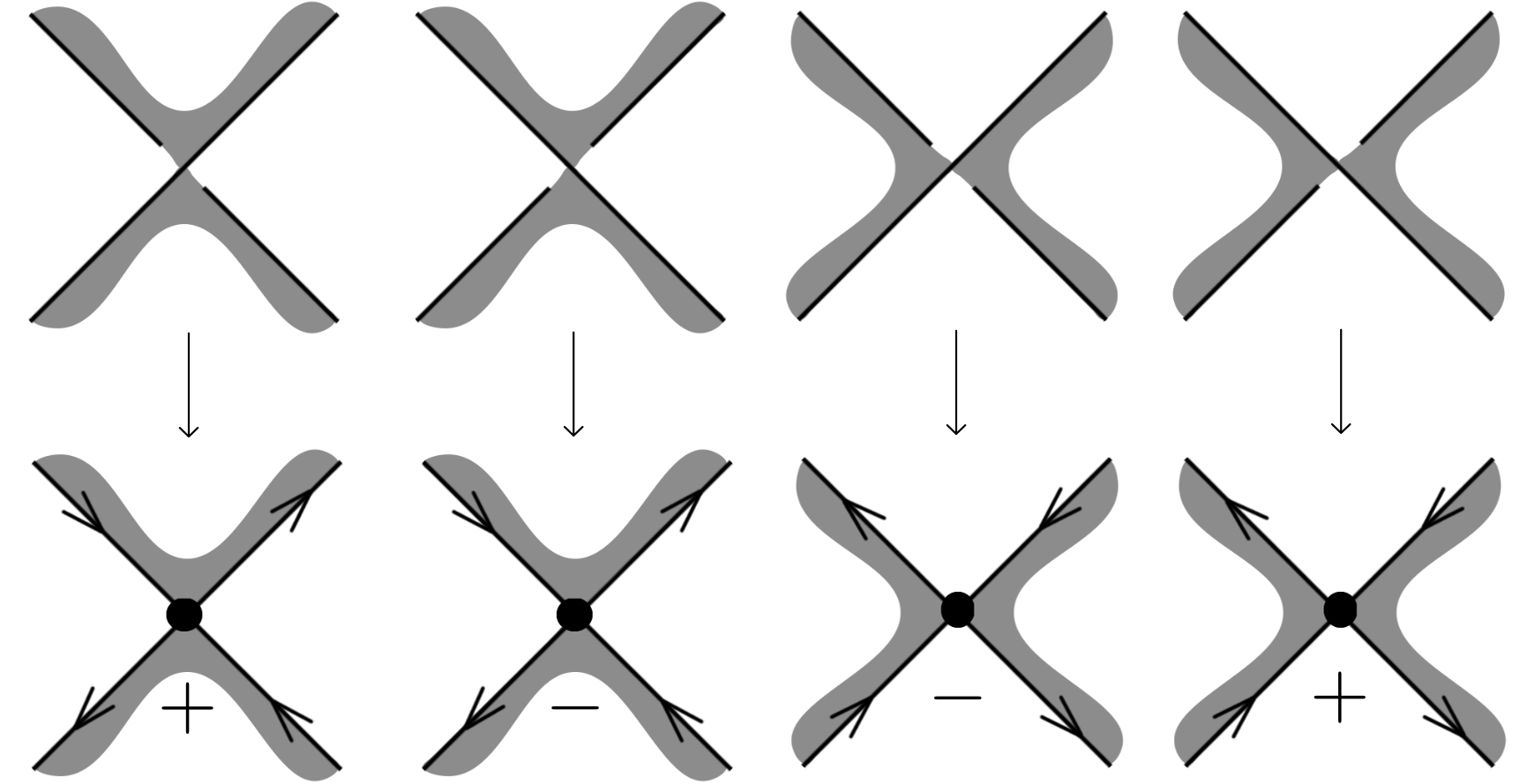}   
\caption{The sign assigning rule for the vertices of the shadow graph.}
\label{fig9}
\end{figure}

\begin{corollary} \label{corollary5.1}
Let $L$ be an oriented checkerboard-colorable virtual link. Let $G$ be the shadow graph of a checkerboard-colorable diagram of $L$ made into a vertex-signed $4$-valent graph, then $f_{L}(q) = (-q)^{-3\omega(L)} X_{G}(q)$, where $f_{L}(q)$ and $\omega(L)$ are the Jones-Kauffman polynomial and the writhe of the oriented virtual link $L$, respectively.
\end{corollary}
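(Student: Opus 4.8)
\section*{Proof proposal for Corollary \ref{corollary5.1}}

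The plan is to reduce the identity $f_L(q)=(-q)^{-3\omega(L)}X_G(q)$ to the corresponding identity at the level of Kauffman brackets, namely $\langle L\rangle = X_G(q)$, and then to prove the latter by showing that $X_G$ obeys exactly the defining recursion of the Kauffman bracket. Since $f_L(q)=(-q)^{-3\omega(L)}\langle L\rangle$ by definition, and since $\omega(L)$ depends only on the oriented diagram, it suffices to establish $\langle L\rangle = X_G(q)$ for the unoriented diagram underlying $L$ together with the vertex signs read off from Figure \ref{fig9}. First I would verify the base cases: if $G$ has no vertices, then $L$ is a disjoint union of $k$ trivial circles; the extension formula for disconnected $2$-digraphs gives $X_G(q)=(-(q^2+q^{-2}))^{k-1}$, matching $\langle L\rangle$ from bracket relations (1) and (2). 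If $G$ has one vertex, a direct computation of the (two) Euler circuits and their weights via Tables \ref{table1} and \ref{table2} should reproduce $\langle L\rangle$ for the two one-crossing diagrams.

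Next I would run the induction on the number of classical crossings using Theorem \ref{theorem2}. Fix a classical crossing $c$ of $L$ corresponding to a vertex $v$ of $G$. The two merging operations at $v$ described in Section 3 are precisely the two Kauffman smoothings of the crossing $c$: one checks, by inspecting the source-target (equivalently, checkerboard) structure near $v$ together with the sign rule of Figure \ref{fig9}, that the shadow graphs of $L_0$ and $L_1$ are $G_0^v$ and $G_1^v$ respectively, with the induced checkerboard colorings and vertex signs. Here the key compatibility point is that the labeling $A/a/B/b$ of the $i$-th marker in Figure \ref{fig4}, which encodes coloring $+$ sign $+$ edge-order, is consistent with the over/under information at $c$ so that a positive (resp.\ negative) vertex in the sense of Figure \ref{fig9} corresponds to the $q\,(\cdot)_0+q^{-1}(\cdot)_1$ (resp.\ $q^{-1}(\cdot)_0+q(\cdot)_1$) smoothing expansion of the bracket. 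Granting this, Theorem \ref{theorem2} gives $X_{G^v}=qX_{G_0^v}+q^{-1}X_{G_1^v}$ for a positive vertex and the mirrored relation for a negative vertex, which is exactly relation (3) in the definition of $\langle L\rangle$ (the bracket's $L\mapsto qL_0+q^{-1}L_1$ is stated for a fixed local picture; the sign of the vertex records which of the two local pictures occurs). By the inductive hypothesis applied to $L_0$ and $L_1$ (each with one fewer crossing), we get $X_{G_0^v}=\langle L_0\rangle$ and $X_{G_1^v}=\langle L_1\rangle$, hence $X_{G^v}=\langle L\rangle$.

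One subtlety to dispatch: Theorem \ref{theorem2} is a statement about abstract $2$-digraphs, whereas the bracket recursion lives at the level of diagrams; I must make sure the disconnection that can occur when a smoothing separates the diagram is handled by the same multiplicative convention $X_G(q)=(-(q^2+q^{-2}))^{m-1}\prod X_{G_i}(q)$ that the bracket uses via relation (2), and that adding a disjoint trivial circle (a crossingless, vertexless component) multiplies $X_G$ by $-(q^2+q^{-2})$ --- this is immediate from the extension formula and is the inductive anchor for relation (2). Also, Theorem \ref{theorem1} guarantees $X_G$ is well defined independently of the chosen checkerboard coloring and vertex labeling, so the choices made along the induction are harmless. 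Finally, multiplying through by $(-q)^{-3\omega(L)}$ and invoking the invariance of $f_L$ under generalized Reidemeister moves (together with the fact, cited from \cite[Theorem 2.9]{Im}, that equivalent checkerboard-colorable diagrams are connected by colorability-preserving moves) yields the stated formula for any checkerboard-colorable diagram of $L$.

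I expect the main obstacle to be the bookkeeping in the second paragraph: pinning down, once and for all and consistently with every figure, that the two merging operations $G\mapsto G_0^v,G_1^v$ correspond to the $0$- and $1$-smoothings of the bracket \emph{with the correct powers of $q$ dictated by the vertex sign}. This is where an off-by-one in a convention (which strand is ``first'', which side is colored, how the sign in Figure \ref{fig9} is read) would silently swap $q\leftrightarrow q^{-1}$ or $L_0\leftrightarrow L_1$; everything else is a routine induction. A clean way to avoid errors is to check this correspondence directly on the single-vertex diagrams in the base case and then note that the local picture near $v$ is identical in the general case.
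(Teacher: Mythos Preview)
Your proposal is correct and follows essentially the same approach as the paper: both arguments reduce to showing $\langle L\rangle = X_G(q)$ by invoking Theorem \ref{theorem2} to match the skein recursion of $X_G$ with that of the Kauffman bracket, after which the definition of $f_L$ gives the result. The paper's proof is a two-line summary of exactly this reasoning, while you have (sensibly) spelled out the induction, base cases, and convention checks that the paper leaves implicit.
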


\begin{proof}
  Let $D$ be a checkerboard-colorable diagram of $L$. By Theorem \ref{theorem2}, the polynomial $X_G(q)$ is exactly the bracket polynomial of $D$. Then the result follows.
\end{proof}

\begin{example}
We consider the checkerboard colorable knot $K = 5.2426$ represented by the diagram on the left of Figure \ref{fig13}. We compute the Jones-Kauffman polynomial of $K$ using our method. The shadow graph of such a knot diagram is depicted on the right of Figure \ref{fig13}. 

\begin{figure}[H]
\centering
\includegraphics[scale=0.18]{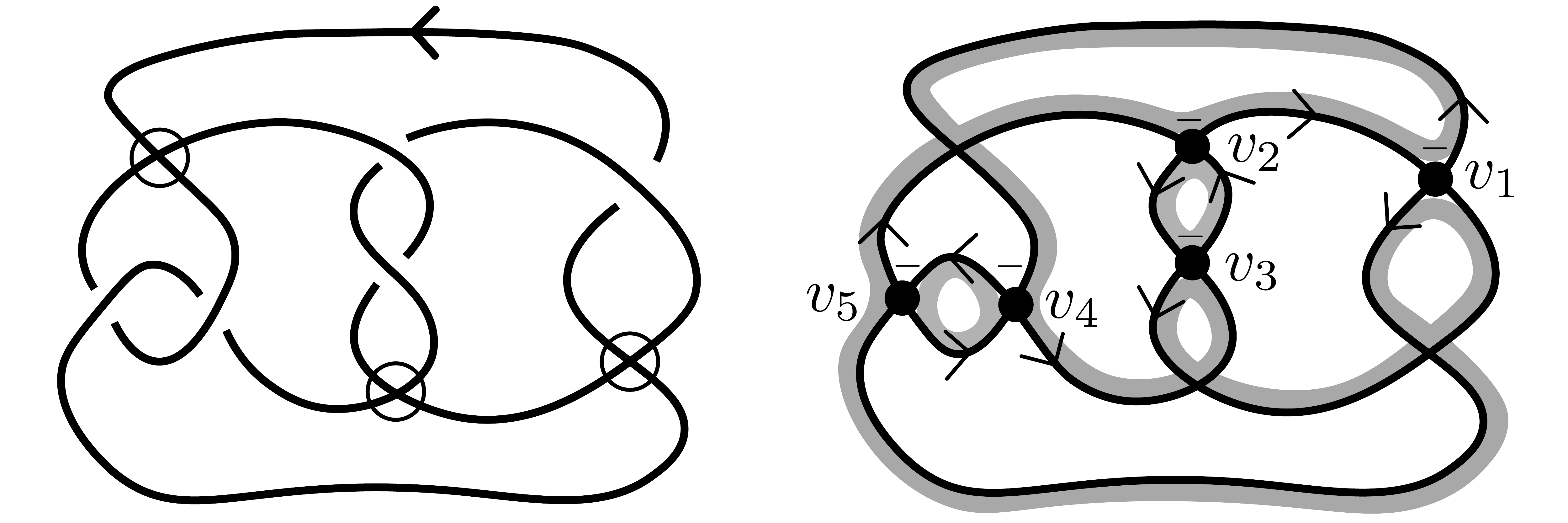}   
\caption{A diagram of the knot $K = 5.2426$ and its shadow graph, respectively.}
\label{fig13}
\end{figure}

The following are the chord diagrams of the nine Euler circuits of the shadow graph together with their activity words and weights. We have placed the label of the marker corresponding to each vertex as the exponent of the vertex label.  

\begin{figure}[H]
\centering
\includegraphics[scale=0.20]{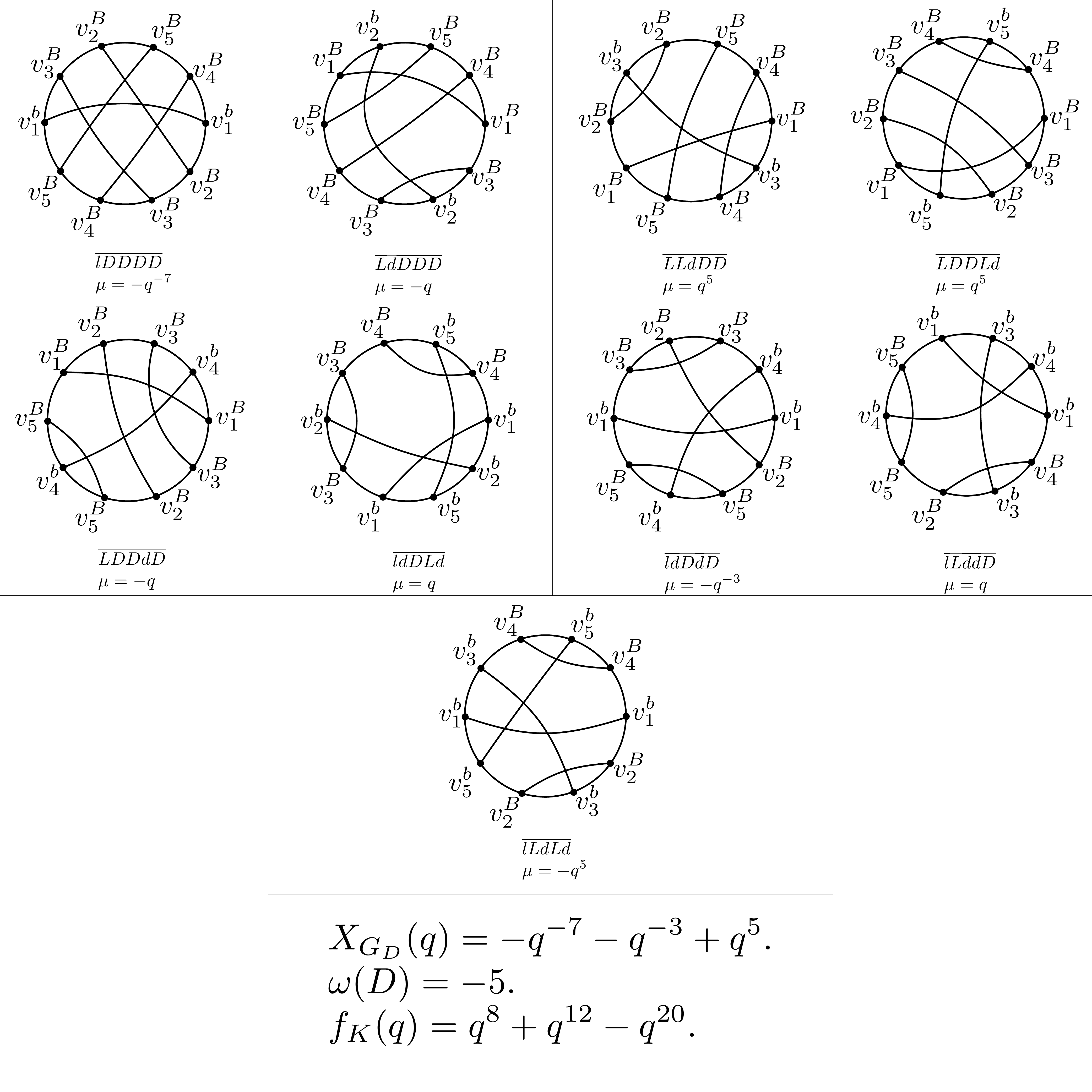}   
\label{fig14}
\end{figure}
\label{example1}
\end{example}

\section{Proof of the Main Theorems}
  We start by recalling some operators defined for graphs and Euler circuits used by Arratia et al in \cite{arratia}. Then we give some lemmas which will be substantial for the proof of the main theorem. For this end, we let $\gamma$ be an Euler circuit of a vertex-signed 2-digraph $G$ with vertices $\{v_1,\dots ,v_n\}$.

\begin{definition}
For any pair of distinct vertices $u$ and $v$ of $G$, we let the graph $G_{uv}$ (or the circuit $\gamma_{uv}$) denote the same object with the labels of $u$ and $v$ being swapped.
\end{definition}

\begin{remark}
If $v_i$ and $v_j$ are two distinct vertices of $G$, then it is clear that the label of the $i$-th marker (respectively $j$-th marker) in the $\gamma$-state is the same as the $j$-th marker (respectively $i$-th marker) in the $\gamma_{v_i v_j}$-state.
\label{remark1}
\end{remark}

\begin{definition}
The \textit{interlace graph} $H = H(\gamma)$ of the 2-digraph $G$ corresponding to the circuit $\gamma$ has the same vertex set as $G$, with an edge $uv$ in $H(\gamma)$ if $u$ and $v$ are interlaced in $\gamma$.
\end{definition}
\begin{definition} 
For any pair of interlacing vertices $u$ and $v$ in the Euler circuit $\gamma$, we let $\gamma^{uv}$ be the circuit obtained by exchanging one of the edge sequences from $u$ to $v$ with the other. This circuit is called the transposition circuit obtained by transposing the circuit $\gamma$ on the pair $uv$. Such operation is well-defined since the circuit $\gamma$ with two interlacing vertices $u$ and $v$ can be written as $\ldots u \ldots v \ldots u \ldots v \ldots$.
\end{definition}
As an example, in Figure \ref{fig3} $\gamma$ can be written as $v_{1}e_{1}v_{2}e_{2}v_{4}e_{3}v_{1}e_{4}v_{2}e_{5}v_{3}e_{6}v_{4}e_{7}v_{3}e_{8}v_{1}$ while $\gamma^{v_{3}v_{4}}$ can be written as $v_{1}e_{1}v_{2}e_{2}v_{4}e_{7}v_{3}e_{6}v_{4}e_{3}v_{1}e_{4}v_{2}e_{5}v_{3}e_{8}v_{1}$. The $\gamma$-state is depicted in Figure \ref{fig6} while the $\gamma^{v_{3}v_{4}}$-state is depicted in Figure \ref{fig7}. One can observe that the $3$rd marker and the $4$th marker are the only ones which change labels when one compares the two states.

\begin{figure}[H]
\centering
\includegraphics[scale=0.22]{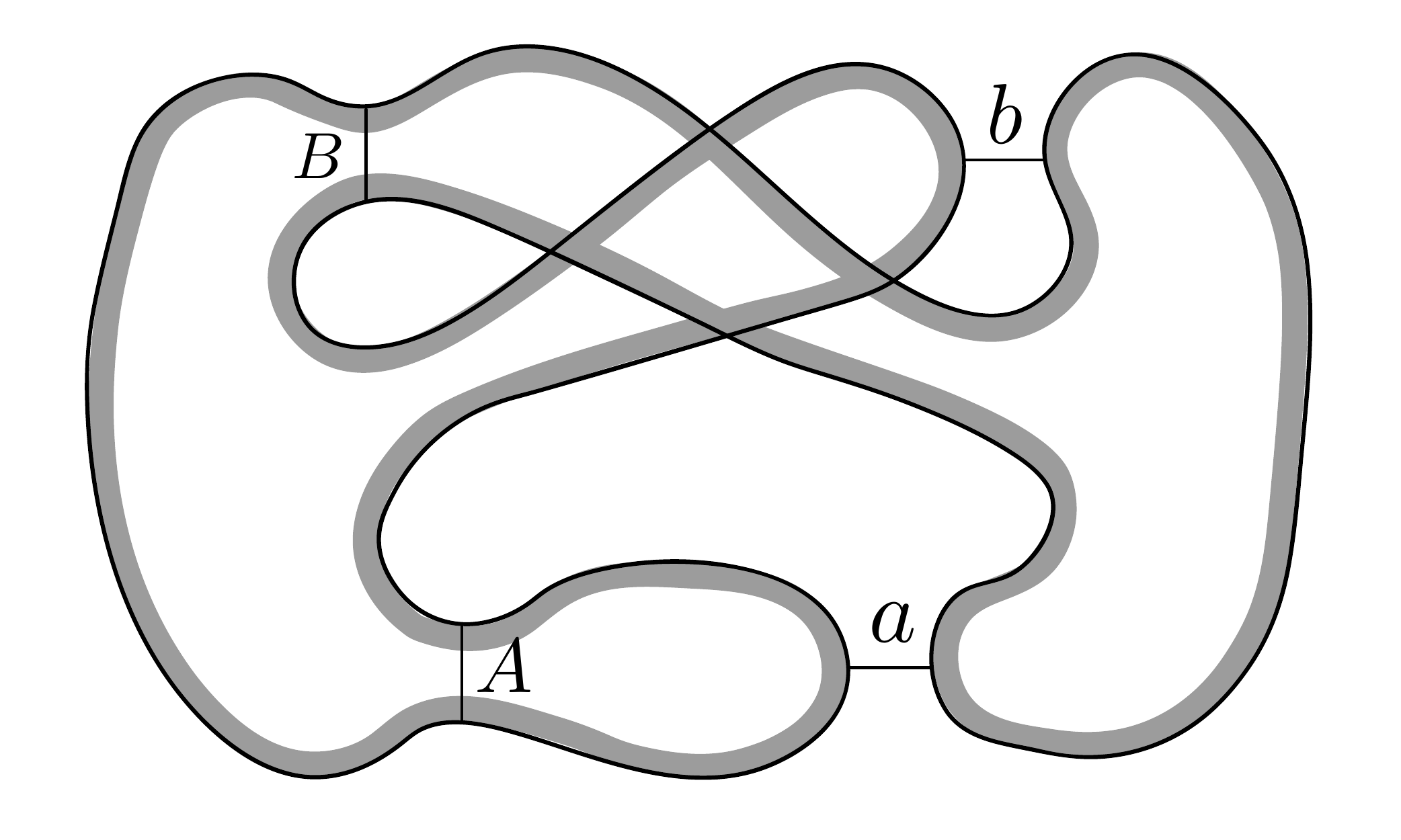}   
\caption{The $\gamma^{v_{3}v_{4}}$-state obtained from the Euler circuit $\gamma$ given in Figure \ref{fig3}.}
\label{fig7}
\end{figure}
\begin{remark}
If $v_i$ and $v_j$ interlace in $\gamma$ and $k \neq i,j$, then the $k$-th marker in the $\gamma$-state has the same label as the $k$-th marker in the $\gamma^{v_i v_j}$-state. On the other hand, the change of the labels of the $i$-th marker and the $j$-th marker in the $\gamma$-state compared to the $\gamma^{v_i v_j}$ can be described as follows: $A \leftrightarrow a$, $B \leftrightarrow b$. 
\label{remark2}
\end{remark}

According to \cite[Lemma\,4]{arratia}, any circuit of $G$ can be obtained from the circuit $\gamma$ by transposing on some interlaced pairs of vertices.

To define another graph operator, we choose a pair of distinct vertices $u$ and $v$ of $G$. Arratia et al. in \cite{arratia} partition the vertices of $G$ other than $u$ and $v$ into four classes:
  \begin{enumerate}
  \item vertices adjacent to $u$ alone;
  \item vertices adjacent to $v$ alone;
  \item vertices adjacent to both $u$ and $v$; and
  \item vertices adjacent to neither $u$ nor $v$.
  \end{enumerate}
\begin{definition}
The {\it pivot} graph by the fixed pair of distinct vertices $u$ and $v$ of $G$ denoted by $G^{uv}$ is obtained by toggling all pairs $xy$ of vertices starting with graph $G$ where $x$ is one of the classes (1-3) and $y$ is in a different class (1-3). The pair $xy$ is ``toggled'' means make it an edge if it was a non-edge and make it a non-edge if it was an edge. 
\end{definition}
\begin{remark}\label{basic}
We point out that an edge in a graph $G$ connecting $u$ or $v$ stays an edge in the pivot graph $G^{uv}$ and vice-versa.
\end{remark}
Now we quote the following key lemma whose proof can be found in \cite{arratia}.
\begin{lemma}\cite[Lemma\,7]{arratia}\label{lemma}
If $H=H(\gamma)$, and $H$ has an edge $uv$ (i.e., $u$ and $v$ are interlaced in $\gamma$), then $H^{uv}=H(\gamma^{uv})_{uv}$.
\end{lemma}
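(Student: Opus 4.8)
The plan is to argue entirely inside the chord-diagram model. By definition $H(\gamma)$ is the interlacement graph of the chord diagram $C(\gamma)$: its vertices are the chords and two of them are joined iff they cross. So the asserted identity $H^{uv}=H(\gamma^{uv})_{uv}$ is a purely combinatorial statement comparing two chord diagrams on the same circle, and I would prove it in three steps: (i) describe $C(\gamma^{uv})$ explicitly in terms of $C(\gamma)$; (ii) read Arratia's four-class partition of the remaining vertices directly off $C(\gamma)$; and (iii) check, over the finitely many local configurations, that passing from $C(\gamma)$ to $C(\gamma^{uv})$ reverses an interlacement relation precisely in the cases where the pivot toggles it.

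For step (i), since $u$ and $v$ interlace the circuit reads cyclically as $\gamma = u\,S_1\,v\,S_2\,u\,S_3\,v\,S_4$, where the $S_k$ are the (possibly empty) blocks of occurrences of the other vertices; on $C(\gamma)$ the endpoints of the $u$- and $v$-chords cut out four consecutive arcs $A_1,A_2,A_3,A_4$ carrying $S_1,S_2,S_3,S_4$, the $u$-endpoints sitting at the $A_4|A_1$ and $A_2|A_3$ junctions and the $v$-endpoints at the $A_1|A_2$ and $A_3|A_4$ junctions. Unwinding the definition of the transposition — exchange the two $u$-to-$v$ edge sequences of $\gamma$ (taking the two $v$-to-$u$ sequences instead yields the same chord diagram) — gives the cyclic reading $\gamma^{uv}=u\,S_3\,v\,S_2\,u\,S_1\,v\,S_4$. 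Hence $C(\gamma^{uv})$ is obtained from $C(\gamma)$ simply by interchanging the contents of $A_1$ and $A_3$, keeping their internal orders, keeping $A_2$ and $A_4$, and keeping the $u$- and $v$-chords and their endpoints in place. I would verify this recipe on the running example of Figures \ref{fig3}, \ref{fig6} and \ref{fig7}.

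For step (ii): a vertex $w\notin\{u,v\}$ is adjacent to $u$ in $H(\gamma)$ iff its two occurrences lie on opposite sides of the $u$-chord, i.e. one in $A_1\cup A_2$ and one in $A_3\cup A_4$, and adjacent to $v$ iff one lies in $A_2\cup A_3$ and one in $A_4\cup A_1$. Running through the possibilities, $w$ has both occurrences in a single arc iff it is adjacent to neither (class 4), its occurrences lie in $\{A_1,A_2\}$ or $\{A_3,A_4\}$ iff it is adjacent to exactly one of $u,v$, in $\{A_2,A_3\}$ or $\{A_4,A_1\}$ iff it is adjacent to the other one, and in $\{A_1,A_3\}$ or $\{A_2,A_4\}$ iff it is adjacent to both (class 3). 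The arc-swap $A_1\leftrightarrow A_3$ interchanges the two ``adjacent to exactly one'' classes and fixes the other two; combining this with the relabelling $(\cdot)_{uv}$ shows that the neighbourhoods of $u$ and of $v$, and the edge $uv$ itself, are the same in $H(\gamma^{uv})_{uv}$ as in $H(\gamma)$, which matches Remark \ref{basic}. It then remains — this is step (iii) — to treat a pair $w,w'$ of vertices other than $u,v$, where the relabelling plays no role: one must decide when interchanging $A_1$ and $A_3$ reverses whether the four endpoints of the chords $w,w'$ occur in alternating cyclic order, i.e. whether $w$ and $w'$ are interlaced. Going through the (few, up to the evident symmetries) combinations of arc-pairs for $w$ and $w'$, and using that the internal orders inside $A_1$ and $A_3$ are preserved, one finds that interlacement is reversed precisely when $w$ and $w'$ lie in two different classes among 1--3 — which is exactly the pivot's toggling rule. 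Assembling the three types of adjacency yields $H^{uv}=H(\gamma^{uv})_{uv}$.

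The step I expect to be the real work is (iii). The crossing of two chords is \emph{not} determined by the pair of arcs holding their endpoints once those chords share an arc, and in particular when the shared arc is $A_1$ or $A_3$, which gets moved; what rescues the argument is that the transposition preserves the orders inside $A_1$ and $A_3$ and leaves the $u$- and $v$-endpoints fixed, so the alternation of the four endpoints changes in a controlled, case-by-case way. One also has to be careful not to let a spurious reversal of an arc creep into the bookkeeping — there is none, and in any case a reversal would not alter interlacement, but it would obscure the casework. Everything else is routine.
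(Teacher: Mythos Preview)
The paper does not give its own proof of this lemma; it is quoted from Arratia--Bollob\'as--Sorkin with an explicit pointer to \cite{arratia} for the argument. Your chord-diagram proof is correct and is essentially the one found there: the transposition on an interlaced pair $uv$ amounts, on the chord diagram, to swapping two opposite arcs among the four cut out by the $u$- and $v$-chords, and a case check shows that this swap reverses the interlacement of two other chords precisely when they lie in two different classes among (1)--(3), i.e.\ exactly when the pivot toggles the corresponding edge, while the relabelling $(\cdot)_{uv}$ repairs the exchange of the ``adjacent to $u$ alone'' and ``adjacent to $v$ alone'' classes.
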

From now on, we denote $H$ to be the interlace graph $H(\gamma)$. Now we can state and prove the following lemmas which will be used in the proof of Theorem \ref{main}.

\begin{lemma}\label{lemma1}
Let $\gamma$ be an Euler circuit of a $2$-digraph with a pair of distinct vertices $v_i$ and $v_j$, then
\begin{enumerate}
\item If $v_k$ is a vertex other than $v_i$ and $v_j$, then $\mathcal{C}_k(\gamma) \setminus \left\lbrace i,j \right\rbrace=\mathcal{C}_k(\gamma_{v_i v_j}) \setminus \left\lbrace i,j \right\rbrace$.
\item If $v_i$ and $v_j$ don not interlace in $\gamma$, then $\mathcal{C}_i(\gamma)=\mathcal{C}_j(\gamma_{v_i v_j})$ and $ \mathcal{C}_j(\gamma)=\mathcal{C}_i(\gamma_{v_i v_j})$.
\item If $v_i$ and $v_j$ interlace in $\gamma$, then $i \in \mathcal{C}_j(\gamma_{v_i v_j}), j \in \mathcal{C}_i(\gamma_{v_i v_j}), \mathcal{C}_i(\gamma)\setminus \left\lbrace j \right\rbrace  = \mathcal{C}_j(\gamma_{v_i v_j})\setminus \left\lbrace i \right\rbrace, \mathcal{C}_j(\gamma)\setminus \left\lbrace i \right\rbrace  = \mathcal{C}_i(\gamma_{v_i v_j})\setminus \left\lbrace j \right\rbrace$.
\end{enumerate}
\end{lemma}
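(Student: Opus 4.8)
The plan is to reduce everything to statements about chord diagrams, since interlacement is defined purely in terms of the chord diagram $C(\gamma)$: two vertices interlace iff their chords cross. The key observation is that relabeling vertices (the operation $\gamma \mapsto \gamma_{v_i v_j}$) does \emph{not} change the underlying circuit as a closed walk in the plane; it only permutes which chord gets which name. For part (1), I would argue as follows. Fix $v_k$ with $k \neq i,j$. Whether $v_k$ interlaces with some $v_\ell$ where $\ell \notin \{i,j\}$ is a property of the pair of chords labeled $k$ and $\ell$, and neither of these chords is affected by swapping the names of the chords labeled $i$ and $j$. Hence $\ell \in \mathcal{C}_k(\gamma)$ iff $\ell \in \mathcal{C}_k(\gamma_{v_i v_j})$ for every $\ell \notin \{i,j\}$, which is exactly $\mathcal{C}_k(\gamma)\setminus\{i,j\} = \mathcal{C}_k(\gamma_{v_i v_j})\setminus\{i,j\}$.

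For parts (2) and (3) I would track what happens to the indices $i$ and $j$ themselves. Let $\chi_i$ and $\chi_j$ be the two chords which, in $C(\gamma)$, carry the labels $i$ and $j$ respectively. After the swap, in $C(\gamma_{v_i v_j})$ the chord $\chi_i$ carries label $j$ and $\chi_j$ carries label $i$; all other chords keep their labels and their positions. For part (2): if $v_i$ and $v_j$ do not interlace in $\gamma$, then $\chi_i$ and $\chi_j$ do not cross, so $j \notin \mathcal{C}_i(\gamma)$ and $i \notin \mathcal{C}_j(\gamma)$, and likewise $i \notin \mathcal{C}_j(\gamma_{v_i v_j})$. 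Now for any $\ell \notin \{i,j\}$, $\ell \in \mathcal{C}_i(\gamma)$ iff $\chi_i$ crosses the chord labeled $\ell$ iff (since in $\gamma_{v_i v_j}$ the chord $\chi_i$ is labeled $j$ and the chord labeled $\ell$ is unchanged) $\ell \in \mathcal{C}_j(\gamma_{v_i v_j})$; combined with $j \notin \mathcal{C}_i(\gamma)$ and $i \notin \mathcal{C}_j(\gamma_{v_i v_j})$ this gives $\mathcal{C}_i(\gamma) = \mathcal{C}_j(\gamma_{v_i v_j})$, and symmetrically $\mathcal{C}_j(\gamma) = \mathcal{C}_i(\gamma_{v_i v_j})$.

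For part (3): if $v_i$ and $v_j$ interlace in $\gamma$, then $\chi_i$ and $\chi_j$ cross, so after the swap the chord labeled $j$ (namely $\chi_i$) still crosses the chord labeled $i$ (namely $\chi_j$), giving $i \in \mathcal{C}_j(\gamma_{v_i v_j})$ and $j \in \mathcal{C}_i(\gamma_{v_i v_j})$. For the set equalities, again for $\ell \notin \{i,j\}$ we have $\ell \in \mathcal{C}_i(\gamma)$ iff $\chi_i$ crosses the chord labeled $\ell$ iff $\ell \in \mathcal{C}_j(\gamma_{v_i v_j})$; removing the (now known) elements $j$ from the left and $i$ from the right yields $\mathcal{C}_i(\gamma)\setminus\{j\} = \mathcal{C}_j(\gamma_{v_i v_j})\setminus\{i\}$, and the symmetric argument gives the remaining equality. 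I do not expect a genuine obstacle here; the only thing to be careful about is bookkeeping — clearly distinguishing the physical chord from the label it carries, and making sure the $\setminus\{i,j\}$ restrictions are handled consistently so that the statements in (2) and (3) come out with exactly the indices claimed. A clean way to present all three parts uniformly is to introduce, once and for all, the notation ``the chord labeled $m$ in $C(\gamma)$'' and note that the map on chords induced by the swap is the transposition of $\chi_i$ and $\chi_j$ as the identity on the plane, so crossing patterns are preserved verbatim.
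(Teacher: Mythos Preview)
Your proposal is correct and follows essentially the same approach as the paper: both arguments rest on the observation that $\gamma_{v_i v_j}$ is the same circuit with only the labels $i$ and $j$ swapped, so interlacement (a property of chord crossings) is preserved verbatim once labels are translated through the transposition. Your version is slightly more explicit in distinguishing the physical chord from its label and in checking the boundary cases $\ell\in\{i,j\}$ separately in parts (2) and (3), but this is a presentational refinement rather than a different argument.
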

\begin{proof}
  
    \begin{enumerate}
  \item The result follows from the fact that the vertices $v_k$ and $v_h$ interlaces in $\gamma$ iff they interlace in $\gamma_{v_{i}v_{j}}$ for $k,h \neq i,j$.
  \item If $v_i $ and $v_j$ don not interlace in $\gamma$, then we have
    \begin{align*}
    h\in C_i(\gamma) \  \Leftrightarrow &  \ v_h\ \text{interlaces with} \ v_i \ \text{in} \ \gamma\\ \Leftrightarrow & \ v_h \ \text{interlaces with} \ v_j \ \text{in} \ \gamma_{v_i v_j}\\ \Leftrightarrow & \ h\in \mathcal{C}_j(\gamma_{v_iv_j})
    \end{align*}
  \item If $v_i $ and $v_j$ interlace in $\gamma$, then after swaping the labels of the vertices $v_i$ and $v_j$the vertices $v_j$ and $v_i$ interlace in $\gamma_{v_i v_j}$. This implies that  $i\in\mathcal{C}_j(\gamma_{v_i v_j})$ and $j\in\mathcal{C}_i(\gamma_{v_i v_j})$. Also, we have
    \begin{align*}
      h\in\mathcal{C}_i(\gamma)\setminus\{j\}\ \Leftrightarrow & \ v_h\ \ \text{interlaces with}\ \ v_i\ \ \text{in}\ \ \gamma\ \ \text{and}\ \ v_h\ne v_j\\\Leftrightarrow & \ v_h\ \ \text{interlaces with}\ \ v_j\ \ \text{in}\ \ \gamma_{v_i v_j}\ \ \text{and}\ \ v_h\ne v_i\\ \Leftrightarrow &\ \  h\in\mathcal{C}_j(\gamma)\setminus\{i\}.
    \end{align*}
  \end{enumerate}
\end{proof}

\begin{lemma}\label{lemma2}
If $\gamma$ is an Euler circuit of a $2$-digraph with an interlacing pair of vertices $v_i$ and $v_j$, then
\begin{enumerate}
\item  $i \in \mathcal{C}_j(\gamma^{v_i v_j}),  j \in \mathcal{C}_i(\gamma^{v_i v_j})$.
\item $\mathcal{C}_i(\gamma) \setminus \left\lbrace j \right\rbrace = \mathcal{C}_j(\gamma^{v_i v_j}) \setminus \left\lbrace i \right\rbrace,  \mathcal{C}_j(\gamma)\setminus \left\lbrace i \right\rbrace = \mathcal{C}_i(\gamma^{v_i v_j})\setminus \left\lbrace j \right\rbrace$.
\item $\mathcal{C}_i(\gamma)=\mathcal{C}_i((\gamma^{v_i v_j})_{v_i v_j}),  \mathcal{C}_j(\gamma)=\mathcal{C}_j((\gamma^{v_i v_j})_{v_i v_j})$.
\end{enumerate}
\end{lemma}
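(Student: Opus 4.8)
The plan is to prove all three parts by reducing everything to the combinatorial behavior of the interlace graph under the two operators $\gamma \mapsto \gamma^{v_iv_j}$ (transposition on an interlaced pair) and $\gamma \mapsto \gamma_{v_iv_j}$ (label swap), using Lemma~\ref{lemma} as the central engine. Recall that $\mathcal{C}_i(\gamma)$ is exactly the neighborhood of $v_i$ in the interlace graph $H(\gamma)$, so each statement is a statement about neighborhoods in $H(\gamma)$, $H(\gamma^{v_iv_j})$, and $H((\gamma^{v_iv_j})_{v_iv_j})$.

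\emph{Part (1).} Since $v_i$ and $v_j$ interlace in $\gamma$, Lemma~\ref{lemma} gives $H^{v_iv_j} = H(\gamma^{v_iv_j})_{v_iv_j}$, i.e.\ $H(\gamma^{v_iv_j}) = (H^{v_iv_j})_{v_iv_j}$. By Remark~\ref{basic}, the edge $v_iv_j$ of $H$ survives into the pivot graph $H^{v_iv_j}$, hence $v_iv_j$ is an edge of $H(\gamma^{v_iv_j})_{v_iv_j}$; swapping the labels $v_i,v_j$ keeps $v_iv_j$ an edge, so $v_iv_j$ is an edge of $H(\gamma^{v_iv_j})$. Translating back, $i\in\mathcal{C}_j(\gamma^{v_iv_j})$ and $j\in\mathcal{C}_i(\gamma^{v_iv_j})$.

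\emph{Part (2).} Fix a vertex $v_h$ with $h\neq i,j$. I need to compare adjacency of $v_h$ to $v_i$ (resp.\ $v_j$) in $\gamma$ with adjacency of $v_h$ to $v_j$ (resp.\ $v_i$) in $\gamma^{v_iv_j}$. Again write $H(\gamma^{v_iv_j}) = (H^{v_iv_j})_{v_iv_j}$: adjacency of $v_h$ to $v_i$ in $H(\gamma^{v_iv_j})$ equals adjacency of $v_h$ to $v_j$ in $H^{v_iv_j}$. Now by Remark~\ref{basic} every edge of $H^{v_iv_j}$ incident to $v_j$ is an edge of $H$ incident to $v_j$ and conversely, so $v_h$ is adjacent to $v_j$ in $H^{v_iv_j}$ iff it is adjacent to $v_j$ in $H$. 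Combining these two equivalences yields $h\in\mathcal{C}_i(\gamma^{v_iv_j})$ iff $h\in\mathcal{C}_j(\gamma)$, which (together with the symmetric statement obtained by interchanging the roles of $i$ and $j$, i.e.\ using that the pivot and swap operators are symmetric in $u,v$) gives exactly $\mathcal{C}_j(\gamma)\setminus\{i\} = \mathcal{C}_i(\gamma^{v_iv_j})\setminus\{j\}$ and $\mathcal{C}_i(\gamma)\setminus\{j\} = \mathcal{C}_j(\gamma^{v_iv_j})\setminus\{i\}$; the removal of $\{i\},\{j\}$ just accounts for the diagonal entries already handled in part (1).

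\emph{Part (3).} Here I apply the label-swap $(\,\cdot\,)_{v_iv_j}$ to the circuit $\gamma^{v_iv_j}$ and use part (2) in the form of the relation $\mathcal{C}_k(\gamma')=\mathcal{C}_k(\gamma'_{v_iv_j})$ is \emph{not} what holds — rather I track the index swap explicitly. The neighborhoods of $\gamma^{v_iv_j}$ in $H(\gamma^{v_iv_j})$ are related to those of $(\gamma^{v_iv_j})_{v_iv_j}$ exactly by swapping the roles of indices $i$ and $j$ (this is Lemma~\ref{lemma1}(3) applied to the circuit $\gamma^{v_iv_j}$ — which is legitimate since by part (1) its vertices $v_i,v_j$ interlace). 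So $\mathcal{C}_i((\gamma^{v_iv_j})_{v_iv_j})\setminus\{j\} = \mathcal{C}_j(\gamma^{v_iv_j})\setminus\{i\}$, and $i\in\mathcal{C}_j((\gamma^{v_iv_j})_{v_iv_j})$, $j\in\mathcal{C}_i((\gamma^{v_iv_j})_{v_iv_j})$. Chaining this with part (2), $\mathcal{C}_i((\gamma^{v_iv_j})_{v_iv_j})\setminus\{j\} = \mathcal{C}_j(\gamma^{v_iv_j})\setminus\{i\} = \mathcal{C}_i(\gamma)\setminus\{j\}$, and since $j$ lies in both $\mathcal{C}_i((\gamma^{v_iv_j})_{v_iv_j})$ (just shown) and $\mathcal{C}_i(\gamma)$ (because $v_i,v_j$ interlace in $\gamma$), the two sets agree on the element $j$ as well, giving $\mathcal{C}_i(\gamma)=\mathcal{C}_i((\gamma^{v_iv_j})_{v_iv_j})$; the statement for $j$ is identical by symmetry.

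The main obstacle I anticipate is purely bookkeeping: keeping straight the distinction between the operator acting on circuits versus on graphs, and the fact that Lemma~\ref{lemma} mixes a pivot on the graph side with a transposition on the circuit side composed with a label swap. The cleanest route is to always rewrite Lemma~\ref{lemma} as $H(\gamma^{v_iv_j}) = (H^{v_iv_j})_{v_iv_j}$ and then invoke Remark~\ref{basic} (edges at $u$ or $v$ are pivot-invariant) to kill the pivot on any neighborhood computation that involves $v_i$ or $v_j$ — every part above is an instance of that single move, followed by tracking the trivial index relabeling from $(\,\cdot\,)_{v_iv_j}$.
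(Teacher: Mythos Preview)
Your proof is correct and follows essentially the same approach as the paper: both arguments hinge on rewriting Lemma~\ref{lemma} as $H(\gamma^{v_iv_j})=(H^{v_iv_j})_{v_iv_j}$ and then invoking Remark~\ref{basic} (edges incident to $v_i$ or $v_j$ are fixed by the pivot) to compare neighborhoods. The only cosmetic differences are that the paper proves part~(1) by the direct observation that transposition leaves the positions of $v_i,v_j$ in the circuit word unchanged (so they remain interlaced), and proves part~(3) by a single chain of equivalences valid for all $k$, whereas you route part~(1) through the interlace-graph machinery and obtain part~(3) by chaining Lemma~\ref{lemma1}(3) with your parts~(1) and~(2); both organizations are equally valid.
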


\begin{proof} 
 \begin{enumerate}
\item  We note that when we transpose the circuit $\gamma$ on the pair $v_i$ and $v_j$, the vertices $v_i$ and $v_j$ stay in place. This implies that $v_i$ and $v_j$ interlace in $\gamma^{v_i v_j}$, and so $i \in \mathcal{C}_j(\gamma^{v_i v_j})$ and $j \in \mathcal{C}_i(\gamma^{v_i v_j})$.
\item We have 
\begin{align*}
k\in\mathcal{C}_j(\gamma^{v_i v_j}) \setminus \left\lbrace i \right\rbrace \Leftrightarrow & \  v_k \ \text{interlaces with} \ v_j \ \text{in} \ \gamma^{v_i v_j}\ \text{and}\ k\ne i\\
 \Leftrightarrow & \ \text{there is an edge} \ e \ \text{connecting} \ v_k \ \text{to}\ v_j\ \text{in}\ H(\gamma^{v_i v_j})\ \text{and}\ k\ne i\\
 \Leftrightarrow & \ \text{there is an edge}\ e\ \text{connecting} \ v_k\ \text{to}\ v_i\ \text{in}\ H(\gamma^{v_i v_j})_{v_iv_j}\ \text{and}\ k\ne j\\
\Leftrightarrow & \  \text{there is an edge $e$ in} \ H^{v_iv_j} \ \text{connecting} \ v_k \ \text{to}\  v_i\ \text{and}\ k\ne j\\
\Leftrightarrow & \ \text{there is an edge $e$  in} \ H \ \text{connecting} \ v_k\ \text{to}\ v_i\ \text{and}\ k\ne j\\
\Leftrightarrow & \ v_k\ \text{interlaces}\ v_i\ \text{in}\ \gamma\ \text{and}\ k\ne j\\
 \Leftrightarrow & \ k\in C_i(\gamma)\setminus\{j\},
\end{align*}
where the fourth equivalence follows from the fact $H^{v_iv_j}=H(\gamma^{v_i v_j})_{v_iv_j}$ in Lemma \ref{lemma}.

The result $\mathcal{C}_j(\gamma)\setminus \left\lbrace i \right\rbrace = \mathcal{C}_i(\gamma^{v_i v_j})\setminus \left\lbrace j \right\rbrace$ can be obtained by a similar manner after interchanging $i$ and $j$ in the previous argument.
  \item We have
\begin{align*}
 k\in\mathcal{C}_i((\gamma^{v_i v_j})_{v_iv_j}) \Leftrightarrow & \ v_k\ \text{interlaces}\ v_i\ \text{in}\ (\gamma^{v_i v_j})_{v_iv_j}\\ \Leftrightarrow & \ v_k\ \text{interlaces}\ v_j\ \text{in}\ \gamma^{v_i v_j}\\
\Leftrightarrow & \ \text{there is an edge} \ e\ \text{connecting} \ v_k\ \text{to}\ v_j\ \text{in}\ H(\gamma^{v_i v_j})=(H^{v_iv_j}(\gamma))_{v_iv_j}\\
\Leftrightarrow & \ \text{there is an edge} \ e\ \text{connecting} \ v_k\ \text{to}\ v_i\ \text{in}\ H^{v_iv_j} \\ \Leftrightarrow & \ v_k\ \text{interlaces with}\ v_i\ \text{in}\ \gamma\\  \Leftrightarrow & \ k\in C_i(\gamma)\\
\end{align*}
\end{enumerate}
\end{proof}

For the next lemma, we partition the set $\left\lbrace 1, \hdots , n \right\rbrace \setminus \left\lbrace i,j \right\rbrace$ into the following disjoint sets: $A_{ij}=\overline{\mathcal{C}}_i(\gamma)  \cap  \overline{\mathcal{C}}_j(\gamma), B_{ij}= (C_i(\gamma)\setminus\{j\}) \cap \overline{C_j}(\gamma), C_{ij}=(C_j(\gamma)\setminus\{i\}) \cap \overline{C_i}(\gamma), D_{ij}=\mathcal{C}_i(\gamma) \cap \mathcal{C}_j(\gamma)$.
\begin{lemma}\label{lemma3}
Let $\gamma$ be an Euler circuit of a $2$-digraph with an interlacing pair of vertices $v_i$ and $v_j$, then

\begin{enumerate}
\item  If $k \in A_{ij}$, then $\mathcal{C}_k((\gamma^{v_i v_j})_{v_i v_j})=\mathcal{C}_k(\gamma)$.
\item If $k \in B_{ij}$, then 

$\mathcal{C}_k((\gamma^{v_i v_j})_{v_i v_j})=\left\lbrace i \right\rbrace \cup \left( \mathcal{C}_k(\gamma) \cap (A_{ij} \cup B_{ij}) \right) \cup \left( \overline{\mathcal{C}}_k(\gamma)\setminus \left\lbrace k \right\rbrace \cap (C_{ij} \cup D_{ij}) \right)$.
\item If $k \in C_{ij}$, then 

$\mathcal{C}_k((\gamma^{v_i v_j})_{v_i v_j})=\left\lbrace j \right\rbrace \cup \left( \mathcal{C}_k(\gamma) \cap (A_{ij} \cup C_{ij}) \right) \cup \left( \overline{\mathcal{C}}_k(\gamma)\setminus \left\lbrace k \right\rbrace \cap (B_{ij} \cup D_{ij}) \right)$.
\item If $k \in D_{ij}$, then 

$\mathcal{C}_k((\gamma^{v_i v_j})_{v_i v_j})=\left\lbrace i,j \right\rbrace \cup \left( \mathcal{C}_k(\gamma) \cap (A_{ij} \cup D_{ij}) \right) \cup \left( \overline{\mathcal{C}}_k(\gamma)\setminus \left\lbrace k \right\rbrace \cap (B_{ij} \cup C_{ij}) \right)$.
\end{enumerate} 
\end{lemma}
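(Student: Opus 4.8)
The plan is to prove Lemma \ref{lemma3} by reducing the two-step operation $(\gamma^{v_i v_j})_{v_i v_j}$ to a single structural statement about interlace graphs, and then to translate that statement back into the language of the sets $\mathcal{C}_k$. First I would invoke Lemma \ref{lemma} together with the obvious relabeling compatibility: $H((\gamma^{v_i v_j})_{v_i v_j}) = H(\gamma^{v_i v_j})_{v_i v_j} = H^{v_i v_j}$, where $H = H(\gamma)$. Thus $\mathcal{C}_k((\gamma^{v_i v_j})_{v_i v_j})$ is precisely the neighborhood of $v_k$ in the pivot graph $H^{v_i v_j}$, and the whole lemma becomes a purely combinatorial computation of the adjacency structure of the pivot graph, which is exactly what the four-class partition (1)--(4) preceding the definition of $G^{uv}$ was set up for. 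With the notation of the lemma, class (1) is $B_{ij}$, class (2) is $C_{ij}$, class (3) is $D_{ij}$, and class (4) is $A_{ij}$ (together with $i$ and $j$ themselves handled separately by Remark \ref{basic}).

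The heart of the argument is then a case analysis on which class $v_k$ belongs to. For each fixed $k$ I would determine, for every other vertex $v_h$, whether the pair $v_k v_h$ is toggled when passing from $H$ to $H^{v_i v_j}$. By the definition of the pivot, the pair $v_k v_h$ is toggled exactly when $k$ and $h$ lie in two \emph{different} classes among (1)--(3); if either of them is in class (4), or if both are in the same class, the adjacency is unchanged. Combined with Remark \ref{basic} (edges incident to $v_i$ or $v_j$ are preserved), this gives: if $k \in A_{ij}$ (class 4) then no pair $v_k v_h$ is toggled and also $v_k$'s adjacency to $v_i, v_j$ is unchanged — but since $k \notin \mathcal{C}_i(\gamma) \cup \mathcal{C}_j(\gamma)$ anyway, we simply get $\mathcal{C}_k((\gamma^{v_i v_j})_{v_i v_j}) = \mathcal{C}_k(\gamma)$, which is item (1). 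For $k \in B_{ij}$ (adjacent to $v_i$ alone), the adjacency to $v_i$ is preserved — contributing the $\{i\}$ term; by Lemma \ref{lemma2}(1) applied with the roles interpreted correctly, $v_k$ is not adjacent to $v_j$ before and stays non-adjacent (it is in class 1, so $v_k v_j$ is not among the toggled pairs since $v_j$ is an endpoint, per Remark \ref{basic}); pairs $v_k v_h$ with $h \in A_{ij} \cup B_{ij}$ are untoggled (same class or class 4), so those $h$ that were in $\mathcal{C}_k(\gamma)$ remain; and pairs $v_k v_h$ with $h \in C_{ij} \cup D_{ij}$ are toggled (different classes among 1--3), so exactly the $h$ that were \emph{not} in $\mathcal{C}_k(\gamma)$ become interlaced. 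Assembling these three contributions yields the formula in item (2). Items (3) and (4) follow by the same bookkeeping with $B_{ij}, C_{ij}$ interchanged (and in case (4), since $v_k \in D_{ij}$ is adjacent to both $v_i$ and $v_j$, both are preserved, giving the $\{i,j\}$ term).

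The main obstacle I anticipate is not conceptual but organizational: one must be scrupulous about two separate issues in each case. First, the indices $i$ and $j$ must be pulled out of the union and treated via Remark \ref{basic} rather than via the toggling rule, since the toggling rule only governs pairs among the four classes of \emph{other} vertices; it is easy to double-count or to wrongly toggle an edge to $v_i$ or $v_j$. Second, the statement "$v_k v_h$ toggled iff $k,h$ in different classes (1)--(3)" must be applied with the correct class labels, and I would want to state and prove as a small preliminary claim (or cite it as the content of the definition of $G^{uv}$) the precise rule: $v_k v_h \in E(H^{v_i v_j}) \iff v_k v_h \in E(H)$ unless $\{k,h\}$ meets two distinct classes among $B_{ij}, C_{ij}, D_{ij}$, in which case the membership flips. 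Once that claim is recorded, each of the four items is a one-line verification of which $h$'s survive and which get toggled in, and the seemingly elaborate formulas collapse to routine set algebra. I would also note for completeness that the $A_{ij}, B_{ij}, C_{ij}, D_{ij}$ genuinely partition $\{1,\dots,n\}\setminus\{i,j\}$ because $v_i, v_j$ interlace, so $j \notin \mathcal{C}_j$ issues do not arise and $B_{ij} = \mathcal{C}_i(\gamma)\setminus(\mathcal{C}_j(\gamma)\cup\{j\})$, etc., line up with classes (1)--(4) cleanly.
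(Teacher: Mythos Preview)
Your proposal is correct and follows essentially the same approach as the paper: both reduce the computation of $\mathcal{C}_k((\gamma^{v_i v_j})_{v_i v_j})$ to the neighborhood of $v_k$ in the pivot graph $H^{v_i v_j}$ via Lemma~\ref{lemma} (together with the obvious relabeling compatibility $H(\gamma_{v_iv_j})=H(\gamma)_{v_iv_j}$), and then carry out the same case analysis on the classes $A_{ij},B_{ij},C_{ij},D_{ij}$ using the toggling rule and Remark~\ref{basic}. Your invocation of Lemma~\ref{lemma2}(1) in the $B_{ij}$ case is superfluous (the non-adjacency of $v_k$ to $v_j$ and its preservation follow directly from $k\in B_{ij}$ and Remark~\ref{basic}, as you yourself note parenthetically), but this does not affect the argument.
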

\begin{proof}
\begin{enumerate}
  \item If $k\in A_{ij}$, then we have 
\begin{align*}
  h\in C_k((\gamma^{v_i v_j})_{v_iv_j}) \Leftrightarrow& \ v_h\ \text{interlaces with}\ v_k\ \text{in}\ (\gamma^{v_i v_j})_{v_iv_j}\\
 \Leftrightarrow & \ \text{there is an edge} \ e \ \text{connecting}\ v_h\ \text{to}\ v_k\ \text{in}\ H(\gamma^{v_i v_j})_{v_iv_j}\\
  \Leftrightarrow &  \ \text{there is an edge} \ e \ \text{connects} \ v_h\ \text{to}\ v_k\ \text{in} \\ & H^{v_i v_j} = H(\gamma^{v_i v_j})_{v_iv_j}\ \text{and}   \ v_k\ \text{not connected to}\ v_i\ \text{nor}\ v_j\\
 \Leftrightarrow & \ \text{there is an edge} \ e\ \text{connecting}\ v_h\ \text{to}\ v_k\ \text{in}\ H\\
 \Leftrightarrow & \ h\in C_k(\gamma).
\end{align*}

\item If $k\in B_{ij}$, then $k\not\in A_{ij}$ or simply $k\in C_i(\gamma)\cup C_j(\gamma)$ with $k\ne i$ and $k\ne j$. Now 
\begin{align*}
h\in C_k((\gamma^{v_i v_j})_{v_iv_j}) \Leftrightarrow & \ v_h \ \text{interlaces with} \ v_k \ \text{in} \ (\gamma^{v_i v_j})_{v_iv_j}\\   \Leftrightarrow & \ v_h \ \text{is connected to} \ v_k \ \text{ by an edge in} \ H((\gamma^{v_i v_j})_{v_iv_j})=H^{v_iv_j}\\ \Leftrightarrow & \ h=i \ \text{or}  \ (h\in C_k(\gamma) \ \text{and}  \ (h\in B_{ij} \  \text{or} \ h\in A_{ij}))\\   & \  \text{or} \ (h\in \overline{C}_k(\gamma)\setminus\{k\} \ \text{and} \ (h\in C_{ij} \ \text{or} \ h\in D_{ij})).
\end{align*}
The third equivalence is true since either $v_h=v_i$ or the edges of $H^{v_iv_j}$ connecting $v_h$ to $v_k$ are:

\begin{itemize}
  \item The edges of $H$ for which one of the ends is $v_k$ and the other end $v_h$ is connected to $v_i$ alone or connected neither to $v_i$ nor $v_j$,
  \item The edges obtained by connecting the vertices $v_h$ which are not connected to $v_k$ in $H$ and which are connected to $v_j$ alone or to both $v_i$ and $v_j$.
  \end{itemize}

\item This can be proved by a similar argument as in (2) after interchanging the roles of $i$ and $j$.

\item We have $k\in D_{ij}=\mathcal{C}_i(\gamma) \cap \mathcal{C}_j(\gamma)$ which implies that $v_k$ interlaces with both $v_i$ and $v_j$ but  $v_k\ne v_i$ and $v_k\ne v_j$.
  \begin{align*}
    h\in C_k((\gamma^{v_i v_j})_{v_iv_j}) \Leftrightarrow & \ v_h \ \text{interlaces with} \ v_k \ \text{in} \ (\gamma^{v_i v_j})_{v_iv_j}\\   \Leftrightarrow & \ v_h \ \text{is connected to} \ v_k \ \text{ by an edge in} \ H((\gamma^{v_i v_j})_{v_iv_j})=H^{v_iv_j}
  \end{align*}
  Now the construction of the pivot graph shows that any edge of $H^{v_iv_j}$ whose one end is $v_k$ comes from the interlace graph $H$ of $\gamma$ in three ways:

 \begin{itemize}
  \item $v_h$ is $v_i$ or $v_j$, i.e., $h\in\{i,j\}$ or
  \item $v_h$ is neither $v_i$ nor $v_j$ and $v_h$ is connected to $v_k$ in $H$ and (($v_h$ is connected to $v_i$ and $v_j$ in $H$) or ($v_h$ is neither connected to $v_i$ nor $v_j$ in $H$)), i.e., $h\in \left( \mathcal{C}_k(\gamma) \cap (A_{ij} \cup D_{ij}) \right)$.
 \item $v_h$ is neither $v_i$ nor $v_j$ and $v_h$ is not connected to $v_k$ in $H$ and (($v_h$ is connected to $v_i$ alone) or ($v_h$ is connected to $v_j$ alone nor $v_j$)), i.e., $\left( \overline{\mathcal{C}}_k(\gamma)\setminus \left\lbrace k \right\rbrace \cap (B_{ij} \cup C_{ij}) \right)$.
  \end{itemize}
  The two last assertions correspond to the toggling operations.
\end{enumerate}

\end{proof}

\begin{lemma}\label{lemma4}
Let $\gamma$ be an Euler circuit of a $2$-digraph $G$ and $v_i$ be a fixed vertex. If $v_k$ is a vertex other than $v_i$ and $v_{i+1}$, then 
\begin{enumerate}
\item $w_k(\gamma)=w_k(\gamma_{v_i v_{i+1}})$. 

\item If $v_i$ and $v_{i+1}$ interlace in $\gamma$, then $w_k(\gamma)=w_k((\gamma^{v_i v_{i+1}})_{v_i v_{i+1}})$ in the following cases:
\begin{enumerate}
\item $k\in A_{i\,i+1}$
\item $i < k$.
\item $v_i$ is live with respect to $\gamma$.
\item $v_i$ is dead with respect to $\gamma$ and $\mathcal{C}_{i+1}(\gamma) \setminus \left\lbrace i \right\rbrace \subset \left\lbrace i+2, \hdots , n \right\rbrace$.
\end{enumerate}
\end{enumerate}

\end{lemma}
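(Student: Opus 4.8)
The plan is to reduce everything to the combinatorial bookkeeping already established in Lemmas 4.2–4.5 plus the definition of the activity letter in Table 1. Recall that $w_k(\gamma)$ is determined by two pieces of data about $v_k$: its \emph{marker label} ($A,a,B,b$), which decides the ``internal/external'' dichotomy and the over/under-bar, and whether $v_k$ is \emph{live} or \emph{dead} with respect to $\gamma$, i.e.\ whether $\mathcal{C}_k(\gamma)\subset\{k+1,\dots,n\}$. So in each case I need to check that \emph{both} of these invariants of $v_k$ are unchanged when we pass from $\gamma$ to the relabelled/transposed circuit.

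For part (1), the marker label of $v_k$ is unchanged by Remark 4.1 (swapping the labels of $v_i$ and $v_{i+1}$ only swaps the $i$-th and $(i+1)$-th markers, leaving the $k$-th marker alone since $k\neq i,i+1$). For the live/dead status, Lemma 4.2(1) gives $\mathcal{C}_k(\gamma)\setminus\{i,j\}=\mathcal{C}_k(\gamma_{v_iv_j})\setminus\{i,j\}$ with $j=i+1$; since $i$ and $i+1$ are consecutive, membership of $i$ or $i+1$ in $\mathcal{C}_k$ is irrelevant to the condition $\mathcal{C}_k\subset\{k+1,\dots,n\}$ precisely when $k\notin\{i-1,i,i+1\}$ — here one must be slightly careful about the boundary index $k=i-1$, but in that case $i=k+1$ and $i+1=k+2$ both lie in $\{k+1,\dots,n\}$, so again their membership does not affect the live/dead test. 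Hence $v_k$ is live in $\gamma$ iff it is live in $\gamma_{v_iv_{i+1}}$, and $w_k$ is unchanged.

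For part (2), the marker-label invariance under $\gamma\mapsto(\gamma^{v_iv_{i+1}})_{v_iv_{i+1}}$ needs an argument: transposing on the pair $\{v_i,v_{i+1}\}$ does change markers (Remark 4.2: $A\leftrightarrow a$, $B\leftrightarrow b$ for the $i$-th and $j$-th markers), but by Remark 4.2 the $k$-th marker for $k\neq i,i+1$ is unchanged under the transposition $\gamma\mapsto\gamma^{v_iv_{i+1}}$, and then unchanged again under the relabelling $(\cdot)_{v_iv_{i+1}}$ by Remark 4.1 — so the marker label of $v_k$ survives intact in all four sub-cases. The real content is therefore the live/dead status, and this is exactly what Lemma 4.5 computes: in case (a), $k\in A_{i\,i+1}$ gives $\mathcal{C}_k((\gamma^{v_iv_{i+1}})_{v_iv_{i+1}})=\mathcal{C}_k(\gamma)$ outright, so there is nothing to prove. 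In case (b), $i<k$ means $i,i+1\le k$, so $i,i+1\notin\{k+1,\dots,n\}$; in each of the formulas of Lemma 4.5(2)–(4) the set $\mathcal{C}_k((\gamma^{v_iv_{i+1}})_{v_iv_{i+1}})$ differs from $\mathcal{C}_k(\gamma)$ only by adding/removing indices that all lie in $\{i,i+1\}\cup(\text{stuff already constrained to be}>i)$ — one checks that the symmetric difference is contained in $\{i,i+1,\dots\}$ in a way that does not cross the threshold $k+1$, so the condition $\mathcal{C}_k\subset\{k+1,\dots,n\}$ holds for one iff it holds for the other. Cases (c) and (d) are where I expect the main obstacle: here $k$ may be less than $i$, so I cannot ignore the indices $i$ and $i+1$. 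The point is that ``$v_i$ live'' ($\mathcal{C}_i(\gamma)\subset\{i+1,\dots,n\}$) forces, via Lemma 4.2(3)/Lemma 4.3's partition, that any $k<i$ interlacing $v_i$ cannot exist, which constrains whether $i\in\mathcal{C}_k(\gamma)$ and hence controls the ``$\{i\}$'', ``$\{j\}$'', ``$\{i,j\}$'' corrections appearing in Lemma 4.5; I would run through the four membership classes $A_{i\,i+1},B_{i\,i+1},C_{i\,i+1},D_{i\,i+1}$ for $v_k$ and verify in each that adding $i$ and/or $i+1$ (both $\ge k+1$ when $k<i$, since then $i\ge k+1$) and swapping membership of indices within $B_{i\,i+1}\cup C_{i\,i+1}\cup D_{i\,i+1}$ keeps the live/dead test constant — using that for $k<i$ the ``new'' indices $i,i+1$ are harmless (they are $>k$) and the reshuffled indices were already interlacing or already not, so their count against the threshold is unaffected by the hypothesis ``$v_i$ live.'' Case (d) is the analogue with the extra hypothesis $\mathcal{C}_{i+1}(\gamma)\setminus\{i\}\subset\{i+2,\dots,n\}$ doing the same job of killing the dangerous small indices, the clause ``$\setminus\{i\}$'' precisely because $v_i$ dead may allow $i\in\mathcal{C}_{i+1}(\gamma)$. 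I would organise the whole of part (2) as a single case analysis on which of $A_{i\,i+1},B_{i\,i+1},C_{i\,i+1},D_{i\,i+1}$ contains $k$, invoking the corresponding clause of Lemma 4.5 and then reading off from the explicit formula that the live/dead threshold is respected under each of the four hypotheses (a)–(d).
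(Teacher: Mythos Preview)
Your overall framework is right: decompose $w_k$ into marker label plus live/dead status, dispose of the marker label via Remarks~\ref{remark1} and~\ref{remark2}, and reduce everything to tracking $\mathcal{C}_k$ via Lemmas~\ref{lemma1} and~\ref{lemma3}. Part~(1) and part~(2)(a) are fine and match the paper.

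There is, however, a genuine gap in your argument for~(2)(b). Your claim that $\mathcal{C}_k((\gamma^{v_iv_{i+1}})_{v_iv_{i+1}})$ differs from $\mathcal{C}_k(\gamma)$ only in indices lying in ``$\{i,i+1\}\cup(\text{stuff already constrained to be}>i)$'' is false. For $k\in B_{i\,i+1}$, Lemma~\ref{lemma3}(2) shows the symmetric difference is exactly $C_{i\,i+1}\cup D_{i\,i+1}=\mathcal{C}_{i+1}(\gamma)\setminus\{i\}$, and hypothesis~(b) places no constraint on these indices; they may well be smaller than $k$. The paper's argument is simpler and avoids this trap entirely: once $k\notin A_{i\,i+1}$ one has $k\in\mathcal{C}_i(\gamma)\cup\mathcal{C}_{i+1}(\gamma)$, so $i$ or $i+1$ lies in $\mathcal{C}_k(\gamma)$; since $i,i+1<k$, the vertex $v_k$ is \emph{dead} in $\gamma$. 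And Lemma~\ref{lemma3}(2)--(4) shows that $i$ or $i+1$ always appears in $\mathcal{C}_k((\gamma^{v_iv_{i+1}})_{v_iv_{i+1}})$, so $v_k$ is dead there too. No comparison of the full sets is needed.

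For~(2)(c) you have the key reduction --- $v_i$ live and $k<i$ force $k\notin\mathcal{C}_i(\gamma)$, hence $k\in C_{i\,i+1}$ once $k\notin A_{i\,i+1}$ --- but the remainder is only a plan. The paper carries it through with Lemma~\ref{lemma3}(3): any witness $l<k$ to ``$v_k$ dead in $(\gamma^{v_iv_{i+1}})_{v_iv_{i+1}}$'' not already in $\mathcal{C}_k(\gamma)$ must lie in $\{i{+}1\}\cup B_{i\,i+1}\cup D_{i\,i+1}\subset\mathcal{C}_i(\gamma)$, contradicting $v_i$ live; conversely any $l<k$ in $\mathcal{C}_k(\gamma)$ satisfies $l<i$ and hence $l\notin\mathcal{C}_i(\gamma)$, so $l\in A_{i\,i+1}\cup C_{i\,i+1}$ and survives into the new $\mathcal{C}_k$.

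For~(2)(d) you propose repeating the~(c) analysis directly. That works, but the paper is slicker: the hypothesis of~(d) says exactly, via Lemma~\ref{lemma1}(3), that $v_i$ is live with respect to $\gamma_{v_iv_{i+1}}$, so one simply applies case~(c) to $\gamma_{v_iv_{i+1}}$ and then invokes part~(1) to transfer the conclusion back to $\gamma$.
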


\begin{proof}
\begin{enumerate}
\item This follows directly from Lemma \ref{lemma1}(1). 
\item The label of the $k$-th marker in the $\gamma$-state is the same as the $k$-th marker in the $(\gamma^{v_i v_{i+1}})_{v_i v_{i+1}}$-state, then we just need to show that $v_k$ has the same activity (live or dead) with respect to $\gamma$ and $(\gamma^{v_i v_{i+1}})_{v_i v_{i+1}}$.   
\begin{enumerate}
\item This case follows by Lemma \ref{lemma3}(1).
\item We can assume that $k \notin A_{i i+1}$ which implies that $k \in \mathcal{C}_i(\gamma) \cup \mathcal{C}_{i+1}(\gamma)$. Now since $i < k$ and $k \neq i+1$, then $i+1<k$. So $v_k$ is dead with respect to $\gamma$. By Lemma \ref{lemma3}, the set $\mathcal{C}_k((\gamma^{v_i v_{i+1}})_{v_i v_{i+1}})$ contains either $i$ or $i+1$. Thus, $v_k$ is dead also with respect to $(\gamma^{v_i v_{i+1}})_{v_i v_{i+1}}$.
\item We can assume that $k \notin A_{i i+1}$, $k < i$, and $v_i$ is live with respect to $\gamma$. This implies that $k \in C_{i i+1}$.  Assume that $v_k$ is live with respect to $\gamma$ and suppose on the contrary that $v_k$ is dead with respect to $(\gamma^{v_i v_{i+1}})_{v_i v_{i+1}}$ which implies that $\mathcal{C}_k(\gamma) \subset \left\lbrace k+1 , \hdots , n \right\rbrace$ and there exists $l \in \mathcal{C}_k((\gamma^{v_i v_{i+1}})_{v_i v_{i+1}})$ such that $l < k$. By Lemma \ref{lemma3}, we have $l \in \left\lbrace i+1 \right\rbrace \cup ( \overline{\mathcal{C}_k}(\gamma) \setminus \left\lbrace k \right\rbrace \cap (B_{i i+1} \cup D_{i i+1})) \subset \mathcal{C}_{i}(\gamma)$. Since $v_i$ is live in $\gamma$, then $i < l$, and we have $l<k$, then $i<k$ which is a contradiction so $k < l$, which is a contradiction. Hence, $v_k$ is live with respect to $(\gamma^{v_i v_{i+1}})_{v_i v_{i+1}}$ also.  Now we assume that $v_k$ is dead with respect to $\gamma$ which implies that there exists $l \in \mathcal{C}_k(\gamma)$ such that $l < k$. Since $B_{i i+1} \cup D_{i i+1} \subset \mathcal{C}_i(\gamma)$, then $l \in A_{i i+1} \cup C_{i i+1}$. By Lemma \ref{lemma3}, we have $l \in \mathcal{C}_k((\gamma^{v_i v_{i+1}})_{v_i v_{i+1}})$, and so $v_k$ is also dead with respect to $(\gamma^{v_i v_{i+1}})_{v_i v_{i+1}}$. 
\item Lemma \ref{lemma1}(3) and the assumption in this case imply that $v_i$ is live with respect to $\gamma_{v_i v_{i+1}}$, so the result follows from $(c)$ and $(1)$. 
\end{enumerate}
\end{enumerate}
\end{proof}
\begin{proof}[Proof of Theorem \ref{theorem1}]
Let $G$ be a checkerboard-colorable vertex-signed $2$-digraph. We start by showing that the two possible checkerboard colorings of $G$ yield the same polynomial. Specifically, as illustrated in Figure \ref{fig4} and summarized in Table \ref{table1}, the changes in the activity word of an Euler circuit due to changing the checkerboard coloring can be described as follows: $L \leftrightarrow \overline{l}, l \leftrightarrow \overline{L}, D \leftrightarrow \overline{d}, \text{and} \ d \leftrightarrow \overline{D}$. Consequently, according to Table \ref{table2}, the weight of the Euler circuit remains unchanged.\\
Now, we show the independence of the polynomial from the vertex labeling. To do so, it suffices to show that $X_G(q)=X_{G_{v_i v_{i+1}}}(q)$ for every integer $i$, $1 \leq i \leq n$. The argument of the proof consists of showing that for each Euler circuit of $G$, there exists a unique Euler circuit of $G_{v_i v_{i+1}}$ with the same weight. For this end, we consider all possible cases as follows:
\begin{enumerate}
\item  If $v_i$ and $v_{i+1}$ do not interlace in $\gamma$, then Lemma \ref{lemma1}(2) and Lemma \ref{lemma4}(1) imply that $w_i(\gamma) = w_{i+1}(\gamma_{v_i v_{i+1}})$, $w_{i+1}(\gamma) = w_i(\gamma_{v_i v_{i+1}})$, and $w_k(\gamma) = w_k(\gamma_{v_i v_{i+1}})$ for all $k \neq i,i+1$. Thus, the weights of $\gamma$ and $\gamma_{v_i v_{i+1}}$ are the same. 

\item If $v_i$ and $v_{i+1}$ interlace in $\gamma$, then we consider all the possible four sub-cases as follows:
\begin{enumerate}
\item If $v_i$ is live with respect to $\gamma$ and $\mathcal{C}_{i+1}(\gamma) \setminus \left\lbrace i \right\rbrace \subset \left\lbrace i+2, \hdots , n \right\rbrace$, then Lemma \ref{lemma4} implies that $w_k(\gamma) = w_k(\gamma_{v_i v_{i+1}}) = w_k(\gamma^{v_i v_{i+1}}) = w_k((\gamma^{v_i v_{i+1}})_{v_i v_{i+1}})$ for all $k \neq i,i+1$. Now according to Lemma \ref{lemma1}(3) and Lemma \ref{lemma2}(2),(3), we obtain that $v_i$ is live  and $v_{i+1}$ is dead with respect to $\gamma$, $\gamma_{v_i v_{i+1}}$, $\gamma^{v_i v_{i+1}}$, and $(\gamma^{v_i v_{i+1}})_{v_i v_{i+1}}$. Using Remark \ref{remark1} and Remark \ref{remark2}, we state the $i$-th and $(i+1)$-th letters of the activity words of the four circuits $\gamma$, $\gamma_{v_i v_{i+1}}$, $\gamma^{v_i v_{i+1}}$, and $(\gamma^{v_i v_{i+1}})_{v_i v_{i+1}}$ in all possible cases in Table \ref{table3}. We have either
\[
\begin{cases} 
    \mu_i(\gamma) \cdot \mu_{i+1}(\gamma) = \mu_i(\gamma_{v_i v_{i+1}}) \cdot \mu_{i+1}(\gamma_{v_i v_{i+1}}) ,& \\
    \mu_i(\gamma^{v_i v_{i+1}}) \cdot \mu_{i+1}(\gamma^{v_i v_{i+1}}) = \mu_i((\gamma^{v_i v_{i+1}})_{v_i v_{i+1}}) \cdot \mu_{i+1}((\gamma^{v_i v_{i+1}})_{v_i v_{i+1}}).
\end{cases}
\]
\[
\quad \text{or} \quad
\begin{cases} 
    \mu_i(\gamma) \cdot \mu_{i+1}(\gamma) =  \mu_i((\gamma^{v_i v_{i+1}})_{v_i v_{i+1}}) \cdot \mu_{i+1}((\gamma^{v_i v_{i+1}})_{v_i v_{i+1}}),& \\
    \mu_i(\gamma^{v_i v_{i+1}}) \cdot \mu_{i+1}(\gamma^{v_i v_{i+1}}) =  \mu_i(\gamma_{v_i v_{i+1}}) \cdot \mu_{i+1}(\gamma_{v_i v_{i+1}}).
\end{cases}
\]

Finally, each Euler circuit of $G$ has exactly the same weight as a unique Euler circuit of $G_{v_i v_{i+1}}$.

\begin{table}[h]
  \centering
  \scriptsize{
\begin{tabular}{|cc|cc|cc|cc|}
  \hline
\multicolumn{2}{|c|}{$\gamma$} & \multicolumn{2}{c|}{$\gamma^{v_i v_{i+1}}$} & \multicolumn{2}{c|}{$\gamma_{v_i v_{i+1}}$} & \multicolumn{2}{c|}{$(\gamma^{v_i v_{i+1}})_{v_i v_{i+1}}$} \\ 
$w_i$ & $w_{i+1}$ & $w_i$ & $w_{i+1}$ & $w_i$ & $w_{i+1}$ & $w_i$ & $w_{i+1}$ \\ \hline
$L$ & $D$ & $l$ & $d$ & $L$ & $D$ & $l$ & $d$ \\
$L$ & $\overline{D}$ & $l$ & $\overline{d}$ & $\overline{L}$ & $D$ & $\overline{l}$ & $d$ \\ 
$L$ & $d$ & $l$ & $D$ & $l$ & $D$ & $L$ & $d$ \\
$L$ & $\overline{d}$ & $l$ & $\overline{D}$ & $\overline{l}$ & $D$ & $\overline{L}$ & $d$ \\
$l$ & $D$ & $L$ & $d$ & $L$ & $d$ & $l$ & $D$ \\
$l$ & $\overline{D}$ & $L$ & $\overline{d}$ & $\overline{L}$ & $d$ & $\overline{l}$ & $D$ \\ 
$l$ & $d$ & $l$ & $D$ & $L$ & $d$ & $L$ & $D$ \\
$l$ & $\overline{d}$ & $L$ & $\overline{D}$ & $\overline{l}$ & $d$ & $\overline{L}$ & $D$ \\
$\overline{L}$ & $D$ & $\overline{l}$ & $d$ & $L$ & $\overline{D}$ & $l$ & $\overline{d}$ \\
$\overline{L}$ & $\overline{D}$ & $\overline{l}$ & $\overline{d}$ & $\overline{L}$ & $\overline{D}$ & $\overline{l}$ & $\overline{d}$ \\ 
$\overline{L}$ & $d$ & $\overline{l}$ & $D$ & $l$ & $\overline{D}$ & $L$ & $\overline{d}$ \\
$\overline{L}$ & $\overline{d}$ & $\overline{l}$ & $\overline{D}$ & $\overline{l}$ & $\overline{D}$ & $\overline{L}$ & $\overline{d}$ \\
$\overline{l}$ & $D$ & $\overline{L}$ & $d$ & $L$ & $\overline{d}$ & $l$ & $\overline{D}$ \\
$\overline{l}$ & $\overline{D}$ & $\overline{L}$ & $\overline{d}$ & $\overline{L}$ & $\overline{d}$ & $\overline{l}$ & $\overline{D}$ \\ 
$\overline{l}$ & $d$ & $\overline{L}$ & $D$ & $l$ & $\overline{d}$ & $L$ & $\overline{D}$ \\
$\overline{l}$ & $\overline{d}$ & $\overline{L}$ & $\overline{D}$ & $\overline{l}$ & $\overline{d}$ & $\overline{L}$ & $\overline{D}$ \\
  \hline
\end{tabular}
}
\caption{}
\label{table3}
\end{table}

\item If $v_i$ is live with respect to $\gamma$ and there exists $j \in \mathcal{C}_{i+1}(\gamma)$ such that $j < i$, then Lemma \ref{lemma4} implies $w_k(\gamma) = w_k(\gamma_{v_i v_{i+1}}) = w_k(\gamma^{v_i v_{i+1}}) = w_k((\gamma^{v_i v_{i+1}})_{v_i v_{i+1}})$ for all $k \neq i,i+1$. Now according to Lemma \ref{lemma1}(3) and Lemma \ref{lemma2}(2)(3), we obtain that $v_i$ is live with respect to $(\gamma^{v_i v_{i+1}})_{v_i v_{i+1}}$ and dead with respect to both $\gamma_{v_i v_{i+1}}$ and $\gamma^{v_i v_{i+1}}$, while $v_{i+1}$ is dead with respect to $\gamma$,  $\gamma_{v_i v_{i+1}}$, $\gamma^{v_i v_{i+1}}$, and $(\gamma^{v_i v_{i+1}})_{v_i v_{i+1}}$.

In Table \ref{table4}, we state the $i$-th and $(i+1)$-th letters of the activity words of the four circuits in all possible cases  using Remark \ref{remark1} and Remark \ref{remark2}. In the eight cases marked by an "$\times$" in the table, we have 
\[
\begin{cases} 
    \mu_i(\gamma) \cdot \mu_{i+1}(\gamma) = -\mu_i(\gamma^{v_i v_{i+1}}) \cdot \mu_{i+1}(\gamma^{v_i v_{i+1}}) ,& \\
    \mu_i(\gamma_{v_i v_{i+1}}) \cdot \mu_{i+1}(\gamma_{v_i v_{i+1}}) = - \mu_i((\gamma^{v_i v_{i+1}})_{v_i v_{i+1}}) \cdot \mu_{i+1}((\gamma^{v_i v_{i+1}})_{v_i v_{i+1}}).
\end{cases}
\]

So in these eight cases, the Euler circuits $\gamma$ and $\gamma^{v_i v_{i+1}}$ do not contribute to $X_G(q)$, neither do $\gamma_{v_i v_{i+1}}$ and $(\gamma^{v_i v_{i+1}})_{v_i v_{i+1}}$ to $X_{G_{v_i v_{i+1}}}(q)$. 

In the remaining cases, we have 

\[
\begin{cases} 
    \mu_i(\gamma) \cdot \mu_{i+1}(\gamma) = \mu_i((\gamma^{v_i v_{i+1}})_{v_i v_{i+1}}) \cdot \mu_{i+1}((\gamma^{v_i v_{i+1}})_{v_i v_{i+1}}) ,& \\ 
   \mu_i(\gamma^{v_i v_{i+1}}) \cdot \mu_{i+1}(\gamma^{v_i v_{i+1}}) = \mu_i(\gamma_{v_i v_{i+1}}) \cdot \mu_{i+1}(\gamma_{v_i v_{i+1}}). 
\end{cases}
\]

Hence the result then follows.

\begin{table}[H]
  \centering
  \scriptsize{
 \begin{tabular}{|cc|cc|cc|cc|c}
\cline{1-8}
\multicolumn{2}{|c|}{$\gamma$} & \multicolumn{2}{c|}{$\gamma^{v_i v_{i+1}}$} & \multicolumn{2}{c|}{$\gamma_{v_i v_{i+1}}$} & \multicolumn{2}{c|}{$(\gamma^{v_i v_{i+1}})_{v_i v_{i+1}}$} \\ 
$w_i$ & $w_{i+1}$ & $w_i$ & $w_{i+1}$ & $w_i$ & $w_{i+1}$ & $w_i$ & $w_{i+1}$ \\ \cline{1-8}
$L$ & $D$ & $d$ & $d$ & $D$ & $D$ & $l$ & $d$ & $\times$ \\
$L$ & $\overline{D}$ & $d$ & $\overline{d}$ & $\overline{D}$ & $D$ & $\overline{l}$ & $d$ & \\ 
$L$ & $d$ & $d$ & $D$ & $d$ & $D$ & $L$ & $d$ & \\
$L$ & $\overline{d}$ & $d$ & $\overline{D}$ & $\overline{d}$ & $D$ & $\overline{L}$ & $d$ & $\times$ \\
$l$ & $D$ & $D$ & $d$ & $D$ & $d$ & $l$ & $D$ & \\
$l$ & $\overline{D}$ & $D$ & $\overline{d}$ & $\overline{D}$ & $d$ & $\overline{l}$ & $D$ & $\times$ \\ 
$l$ & $d$ & $D$ & $D$ & $D$ & $d$ & $L$ & $D$ & $\times$ \\
$l$ & $\overline{d}$ & $D$ & $\overline{D}$ & $\overline{d}$ & $d$ & $\overline{L}$ & $D$ & \\
$\overline{L}$ & $D$ & $\overline{d}$ & $d$ & $D$ & $\overline{D}$ & $l$ & $\overline{d}$ & \\
$\overline{L}$ & $\overline{D}$ & $\overline{d}$ & $\overline{d}$ & $\overline{D}$ & $\overline{D}$ & $\overline{l}$ & $\overline{d}$ & $\times$ \\ 
$\overline{L}$ & $d$ & $\overline{d}$ & $D$ & $d$ & $\overline{D}$ & $L$ & $\overline{d}$ & $\times$ \\
$\overline{L}$ & $\overline{d}$ & $\overline{d}$ & $\overline{D}$ & $\overline{d}$ & $\overline{D}$ & $\overline{L}$ & $\overline{d}$ & \\
$\overline{l}$ & $D$ & $\overline{D}$ & $d$ & $D$ & $\overline{d}$ & $l$ & $\overline{D}$ & $\times$ \\
$\overline{l}$ & $\overline{D}$ & $\overline{D}$ & $\overline{d}$ & $\overline{D}$ & $\overline{d}$ & $\overline{l}$ & $\overline{D}$ & \\ 
$\overline{l}$ & $d$ & $\overline{D}$ & $D$ & $d$ & $\overline{d}$ & $L$ & $\overline{D}$ & \\
$\overline{l}$ & $\overline{d}$ & $\overline{D}$ & $\overline{D}$ & $\overline{d}$ & $\overline{d}$ & $\overline{L}$ & $\overline{D}$ & $\times$ \\
\cline{1-8}
 \end{tabular}
 }
\caption{}
\label{table4}
\end{table}
\item If $v_i$ is dead with respect to $\gamma$ and $\mathcal{C}_{i+1}(\gamma) \setminus \left\lbrace i \right\rbrace \subset \left\lbrace i+2, \hdots , n \right\rbrace$, then  Lemma \ref{lemma1}(3) implies that $v_i$ is live with respect to $\gamma_{v_i v_{i+1}}$ and there exists $j \in \mathcal{C}_{i+1}(\gamma_{v_i v_{i+1}})$ such that $j < i$. The result follows from the previous case by interchanging the roles of $\gamma$ and $\gamma_{v_i v_{i+1}}$.

\item If $v_i$ is dead with respect to $\gamma$ and there exists $j \in \mathcal{C}_{i+1}(\gamma)$ such that $j < i$, then Lemma \ref{lemma1}(3) implies that $v_i$ is dead with respect to $\gamma_{v_i v_{i+1}}$, and  $w_i(\gamma)=w_{i+1}(\gamma_{v_i v_{i+1}}), w_{i+1}(\gamma)=w_i(\gamma_{v_i v_{i+1}})$. By Lemma \ref{lemma2}(1), we have that $\gamma$ and $\gamma_{v_i v_{i+1}}$ have the same weights.
\end{enumerate}
\end{enumerate}
\end{proof}

\begin{proof}[Proof of Theorem \ref{theorem2}]
We prove the first case and the same argument can be applied to obtain the second case. Now we label the vertices of $G_{v}$ as $v_1, \hdots , v_n$ such that $v=v_n$. We consider two subcases:
\begin{enumerate}
\item If $v$ is not a cut vertex that is a vertex whose removal does not increase the number of components, then it is not too hard to see that there is a natural one-to-one correspondence $\psi$ induced by the merging operations introduced in Section 3 between the set of Euler circuits of $G^v$ 
and the disjoint union of the sets of the Euler circuits of $G_0^v$ and $G_1^v$ 
In fact, if $\gamma$ is an Euler circuit of $G^v$ 
then $\psi(\gamma)$ is an Euler circuit of $G_0^v$ or $G_1^v$ by choosing the appropriate merging operation at the vertex $v$.  Also, if $\beta$ is an Euler circuit of $G_0^v$ or $G_1^v$, then $\psi^{-1}(\beta)$ is an Euler circuit of $G$ obtained by reversing the merging operation at the vertex $v$. Now the chord diagram $C(\psi(\gamma))$ of $\psi(\gamma)$ is obtained from the chord diagram $C(\gamma)$ simply by deleting the $n$-th chord. Thus, if $W(\gamma)=w_1(\gamma) \hdots w_n(\gamma)$ is the activity word of $\gamma$, then the activity word $W(\psi(\gamma))$ of $\psi(\gamma)$ is $w_1(\gamma) \hdots w_{n-1}(\gamma)$. Moreover,  it is easy to see that $w_n(\gamma) \in \left\lbrace D,d \right\rbrace$ since $v$ is equipped with a positive sign and $C_{n} \neq \emptyset$. According to the proof of Theorem \ref{main}, the result is independent of the choice of  coloring for $G^v$. Thus we can choose appropriate coloring of $G^v$ in a way that the $n$-th marker in the graph $G_0^v$ is an $A$-marker and the $n$-th marker in the graph $G_1^v$ is an $a$-marker. 
This implies that $w(\gamma) = qw(\psi(\gamma))$ or $w(\gamma) = q^{-1}w(\psi(\gamma))$, respectively. Thus the result follows directly from the summation over all Euler circuits of $G^v$.
\item If $v$ is a cut vertex, then one of the two graphs $G_0^v$ or $G_1^v$ consists of two components. Without loss of generality, we can assume that $G_0^v$ is connected and $G_1^v$ consists of two components denoted by $(G_1^v)'$ and $(G_1^{v})''$. Now it is not too hard to see that adding or removing  appropriately a new chord that corresponds to the vertex $v$ yields a one-to-one correspondence $\eta$ between the chord diagrams of $G^v$ and the chord diagrams of the graph $G_0^v$. Thus, if $W(\gamma)=w_1(\gamma) \hdots w_n(\gamma)$ is the activity word of $\gamma$, then the activity word $W(\eta(\gamma))$ of $\eta(\gamma)$ is $w_1(\gamma) \hdots w_{n-1}(\gamma)$. Moreover,  it is easy to see that $w_n(\gamma) \in \left\lbrace L,l \right\rbrace$ since $v$ is equipped with a positive sign and $C_{n}(\gamma) = \emptyset$. According to the proof of Theorem \ref{main}, the result is independent of the choice of coloring for $G^v$. Thus we can choose appropriate coloring of $G^v$ in a way that the $n$-th marker in the graph $G_0^v$ is an $A$-marker and the $n$-th marker in the graph $G_1^v$ is an $a$-marker. This implies that $X_{G^v}(q) = -q^{-3}X_{G_0^v}(q)$. Now we prove that the right-hand side has also the same value.
We can see that any chord diagram for the graph $G_0^v$ is a union of two chord diagrams in appropriate way of the graphs $(G_1^v)'$ and $(G_1^{v})''$ and vice-versa. This implies that  $X_{G_1^v}(q) = -(q^{2}+q^{-2}) X_{G_0^v}(q)$. Now we obtain $q  X_{G_0^v}(q) + q^{-1}  X_{G_1^v}(q) = qX_{G_0^v}(q) + q^{-1}(-q^{2}-q^{-2})X_{G_0^v}(q) = -q^{-3}X_{G_0^v}(q)$ as requested.
\end{enumerate}
\end{proof}

\end{document}